\newcommand{\pse}{\psi'({\eta})}
\newcommand{\Nat}{\mathbb N}
\newcommand{\R}{\mathbb R}
\newcommand{\Esp}{\mathbb E}
\newcommand{\vs}{\varsigma}
\newcommand{\p}{\mathbb P}
\newcommand{\lop}{\log^+}
\newcommand{\supt}{\sup_{t\geq0}}
\newcommand{\supn}{\sup_{n>0}}
\newcommand{\1}{\mathbf{1}\!}
\newcommand{\ext}{\mathrm{Ext}}
\newcommand{\intpos}{\int_0^\infty}
\newtheorem{prop}{Proposition}[section]
\newtheorem{lem}[prop]{Lemma}
\newtheorem{thm}[prop]{Theorem}
\newtheorem{rem}[prop]{Remark}
\author{Mathieu Richard}
\address{Laboratoire de Probabilit\'es et Mod\`eles Al\'eatoires, UMR 7599, Univ. Paris 6 UPMC, Case courrier 188,
4, Place Jussieu, 75252 PARIS Cedex 05, France.   \href{mailto:mathieu.richard@upmc.fr}{\nolinkurl{mathieu.richard@upmc.fr}}}
\title[Limit theorems for supercritical branching processes with immigration]{Limit theorems for supercritical age-dependent branching processes with neutral immigration}
\keywords{splitting tree, Crump-Mode-Jagers process, spine decomposition, immigration, structured population, GEM distribution, biogeography, almost-sure limit theorem  }
\subjclass[2000]{Primary: 60J80; Secondary: 60G55, 92D25, 60J85, 60F15, 92D40}
\begin{document}\maketitle %\tableofcontents
\begin{abstract} We consider a branching process with Poissonian immigration where individuals have inheritable types. At rate $\theta$, new individuals singly enter the total population and start a new population which evolves like a supercritical, homogeneous, binary Crump-Mode-Jagers process: individuals have i.i.d. lifetimes durations (non necessarily exponential) during which they give birth independently at constant rate $b$.
First, using spine decomposition, we relax previously known assumptions required for a.s. convergence of total population size.
 Then, we consider three models of structured populations: either all immigrants have a different type, or types are drawn in a  discrete spectrum or in a continuous spectrum. In each model, the vector $(P_1,P_2,\dots)$ of relative abundances of surviving families converges a.s. In the first model, the limit is the GEM distribution with parameter $\theta/b$.
\end{abstract}

\section{Introduction}
We want to study and give some properties about several birth-death and immigration models where immigration is structured. All individuals behave independently of the others, their lifetimes are i.i.d. but non-necessarily exponential and each individual gives birth at constant rate $b$ during her life. We will consider the supercritical case i.e., the mean number of children of an individual is greater than 1.
In the absence of immigration, if $X(t)$ denotes the number of extant individuals at time $t$, the process $(X(t),t\geq0)$ is a particular case of Crump-Mode-Jagers (or CMJ) processes \cite{Jagers_BP_with_bio}, also called general branching processes. Here, $X$ is a binary (births arrive singly) and homogeneous (constant birth rate) CMJ process.
Now, we assume that at each arrival time of a Poisson process with rate $\theta$, a new individual enters the population and starts a new population independently of the previously arrived ones. This immigration model extends to general lifetimes the mainland-island model of S. Karlin and J. McGregor \cite{Kar_McGreg67}. In that case, the total population process $X$ is a linear birth-and-death process with immigration. For more properties about this process, see \cite{Tavaré_2} or \cite{Watterson1974a} and references therein. In the context of ecology \cite{Caswell1976}, this model can be used as a null model of species diversity, in the framework of the neutral theory of biodiversity \cite{Hubbell}.

We first give the asymptotic behavior of the process $(I(t),t\geq0)$ representing the total number of extant individuals on the island at time $t$. Specifically, there exists $\eta>0$ (the Malthusian parameter associated with the branching process $X$) such that $e^{-\eta t}I(t)$ converges almost surely. S. Tavar\'e \cite{Tavaré_BirthProcImm} proved this result in the case of a linear birth process with immigration. The case of general CMJ processes was treated by P. Jagers \cite{Jagers_BP_with_bio} under the hypothesis that the variance of the number of children per individual is finite. We manage to relax this assumption in the case of homogeneous (binary) CMJ-processes thanks to spine decomposition of splitting trees \cite{Geiger1997,Amaury_cours_Mexique,Amaury_contour_splitting_trees} which are the genealogical trees generated by those branching processes. In passing, we obtain technical results on the log-integrability of $\sup_t e^{-\eta t}X(t)$.

Then, we consider models where individuals bear clonally inherited types. They intend to model a metacommunity (or mainland) which delivers immigrants to the island as in the theory of island biogeography \cite{McArthur_Wilson}. However, we made specific assumptions about the spectrum of abundances in the metacommunity. In Model I,  there is a discrete spectrum with zero macroscopic relative abundances: when an immigrant enters the population, it is each time of a new type. In Model II, we consider a discrete spectrum with nonzero macroscopic relative abundances: the type of each new immigrant is chosen according to some probability $(p_i,i\geq1)$.
In Model III, we consider a continuous spectrum of possible types but to enable a type to be chosen several times from the metacommunity, we change the immigration model:
 at each immigration time, an individual belonging to a species with abundance in $(x,x+dx)$ is chosen with probability $\frac{xf(x)}{\theta}dx$ (where $f$ is a positive function representing the abundance density and such that $\theta:=\intpos xf(x)dx<\infty$) and it starts an immigration process with immigration rate $x$. The particular case of abundance density  $f(x)=\frac{e^{-ax}}{x}$ appears in many papers.  I. Volkov et al. \cite{12944964} and G. Watterson \cite{Watterson1974} consider it as a continuous equivalent of the logarithmic series distribution proposed by R. Fisher et al. \cite{Fisher1943} as a species abundance distribution. In this particular case, species with small abundances are often drawn but they will have a small immigration rate.

In the three models, we get results for the abundances $P_1,P_2,\dots$ of different types as time $t$ goes to infinity: the vector $(P_1,P_2,\dots)$ rescaled by the total population size converges almost surely. More precisely, in Model I which is an extension of  S. Tavar\'e's result \cite{Tavaré_BirthProcImm} to general lifetimes, we consider the abundances of the surviving families ranked by decreasing ages and the limit follows a \emph{GEM distribution} with parameter $\theta/b$.
This distribution appears in other contexts: P. Donnelly and S. Tavar\'e \cite{Donnelly1986} proved that for a sample of size $n$ whose genealogy is described  by a Kingman coalescent with mutation rate $\theta$, the frequencies of the oldest, second oldest, etc. alleles converge in distribution
as $n\rightarrow\infty$ to the GEM distribution with parameter $\theta$; S. Ethier \cite{Ethier1990} showed that it is also the distribution of the frequencies of the alleles ranked by decreasing ages in the stationary infinitely-many-neutral-alleles diffusion model.

In a sense, the surviving families that we consider in our immigration model are "large" families because their abundances are of the same order as the population size. A. Lambert \cite{Amaury-Immig-Mut} considered "small" families: he gave the joint law of the number of species containing $k$ individuals, $k=1,2,\dots$

In Section \ref{prelimin}, we describe the models we consider and state results we prove in other sections. Section \ref{preuvepropCMJ} is devoted to proving a result about the process $X$, Section \ref{preuvetheoprincipal} to proving a property of the immigration process $(I(t),t\geq0)$ while in Section \ref{other_proofs}, we prove theorems concerning the relative abundances of types in the three models.

\section{Preliminaries and statement of results}\label{prelimin}
We first define splitting trees which are random trees satisfying:
\begin{itemize}
\item  individuals behave independently from one another and have i.i.d. lifetime durations,
\item conditional on her birthdate $\alpha$ and her lifespan $\zeta$, each individual reproduces according to a Poisson point process on $(\alpha,\alpha+\zeta)$ with intensity $b$,
\item births arrive singly.
\end{itemize}
We denote the common distribution of lifespan $\zeta$ by $\Lambda(\cdot)/b$ where $\Lambda$ is a positive measure on $(0,\infty)$ with mass $b$ called the \emph{lifespan measure} \cite{Amaury_contour_splitting_trees}.

The total population process $(X(t),t\geq0)$ belongs to a large class of processes called Crump-Mode-Jagers or CMJ processes. In these processes, also called general branching processes \cite[ch.6]{Jagers_BP_with_bio}, a typical individual reproduces at ages according to a random point process $\xi$ on $[0,\infty)$ (denote by $\mu:=\Esp[\xi]$ its intensity measure) and it is alive during a random time $\zeta$.
 Then, the CMJ-process is defined as
 $$X(t)=\sum_{x}\1_{\{\alpha_x\leq t<\alpha_x+\zeta_x\}},\ \ t\geq0$$
 where for any individual $x$, $\alpha_x$ is her birth time and $\zeta_x$ is her lifespan.
In this work, the process $X$ is a homogeneous (constant birth rate) and binary CMJ-process and we get $$\mu(dx)=dx\int_{[x,\infty)}\!\Lambda(dr).$$

We assume that the mean number of children per individual $m:=\int_{(0,\infty)}r\Lambda(dr)$ is greater than 1 (supercritical case).

 For $\lambda\geq0$, define $\psi(\lambda):=\lambda-\int_{(0,\infty)}(1-e^{-\lambda r})\Lambda(dr)$. The function $\psi$ is convex, differentiable on $(0,\infty)$, $\psi(0^+)=0$ and $\psi'(0^+)=1-\int_0^\infty r\Lambda(dr)<0$.
 Then there exists a unique positive real number $\eta$ such that $\psi(\eta)=0$. It is seen by direct computation that this real number is a \emph{Malthusian parameter} \cite[p.10]{Jagers_BP_with_bio}, i.e. it is the finite positive solution of $\int_0^\infty e^{-\eta r}\mu(dr)=1$ and is such that $X(t)$ grows like $e^{\eta t}$ on the survival event (see forthcoming Proposition \ref{propCMJ}).
From now on, we define $$c:=\psi'(\eta)$$ which is positive because $\psi$ is convex.

Another branching process appears in splitting trees: if we denote by $\mathcal{Z}_n$ the number of individuals belonging to generation $n$ of the tree, then $(\mathcal{Z}_n,n\geq0)$ is a Bienaym\'e-Galton-Watson process started at 1 with offspring generating function $$f(s):=\int_{(0,\infty)}b^{-1}\Lambda(dr)e^{-br(1-s)}\quad 0\leq s\leq1.$$

To get results about splitting trees and CMJ-processes, A. Lambert \cite{Amaury_cours_Mexique,Amaury_contour_splitting_trees} used tree contour techniques. He proved that the contour process $Y$ of a splitting tree was a \emph{spectrally positive} (i.e. with no negative jumps) L\'evy process whose Laplace exponent is $\psi$. Lambert obtained result about the law of the population in a splitting tree alive at time $t$. If $\tilde\p_x$
denotes the law of the process $(X(t),t\geq0)$ conditioned to start with a single ancestor living $x$ units of time,
\begin{equation}\label{prop1loideX(t)}\tilde\p_x(X(t)=0)=W(t-x)/W(t)\end{equation}
and conditional on being nonzero, $X(t)$ has a geometric distribution with success probability $1/W(t)$
i.e. for $n\in\Nat^*$,
\begin{equation}\label{prop2loideX(t)}\tilde\p_x\left(X(t)=n\right)=\left(1-\frac{W(t-x)}{W(t)}\right)\left(1-\frac{1}{W(t)}\right)^{n-1}\frac{1}{W(t)}\end{equation}
 where $W$ is the \emph{scale function} \cite[ch.VII]{Levy_processes} associated with $Y$: this is the unique absolutely continuous increasing function $W:[0,\infty]\rightarrow [0,\infty]$ satisfying
\begin{equation}\label{defW}\intpos e^{-\lambda x}W(x)dx=\frac{1}{\psi(\lambda)}\quad \lambda>\eta.\end{equation}
The two-sided exit problem can be solved thanks to this scale function:
\begin{equation}\label{doubleexit}\p\left(T_0<T_{(a,+\infty)}|Y_0=x\right)=W(a-x)/W(a),\ \ 0<x<a
\end{equation}
where for a Borel set $B$ of $\R$, $T_B=\inf\{t\geq0,\ Y_t\in B\}$.

We now give some properties, including asymptotic behavior, about the CMJ-process $X$.
\begin{prop}\label{propCMJ}We denote by {\rm Ext} the event $\displaystyle\left\{\lim_{t\rightarrow\infty}X(t)=0\right\}$.\begin{enumerate}[(i)]\item We have
\begin{equation}\label{probaextinction}
\p({\rm Ext})=1-\eta/b
\end{equation}
and conditional on ${\rm Ext}^c$,
\begin{equation}\label{premier_lemme}
e^{-\eta t}X(t)\overset{\textrm{a.s.}}{\underset{t\rightarrow\infty}\longrightarrow} E\end{equation} where $E$ is an exponential random variable with parameter $c$.
\item If, for $x>0$, $\lop x:=\log x\vee0$,
\begin{equation}\label{logsupXt}
\Esp\left[\left.\left(\log^+ \sup_{t\geq0}\left(e^{-\eta t}X(t)\right)\right)^2\right|\ext^c\right]<\infty\end{equation}
\end{enumerate}
\end{prop}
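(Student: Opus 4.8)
The plan is to treat the two assertions with different tools: assertion (i) rests on the explicit law (\ref{prop1loideX(t)})--(\ref{prop2loideX(t)}) together with the asymptotics of the scale function, whereas assertion (ii) is where the spine decomposition does the real work. For the extinction probability I would pass to the limit in (\ref{prop1loideX(t)}). The analytic input is that, since $\intpos e^{-\lambda x}W(x)\,dx=1/\psi(\lambda)$ for $\lambda>\eta$ and $\psi(\lambda)\sim c(\lambda-\eta)$ near $\eta$, the scale function satisfies $W(x)\sim e^{\eta x}/c$ as $x\to\infty$; in particular $W(t-x)/W(t)\to e^{-\eta x}$. Unconditioning the ancestral lifespan in (\ref{prop1loideX(t)}) gives
$$\p(X(t)=0)=\frac1{W(t)}\int_{(0,t)}W(t-x)\,\frac{\Lambda(dx)}{b},$$
and dominated convergence yields $\p(\ext)=\frac1b\int_{(0,\infty)}e^{-\eta x}\Lambda(dx)$. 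By the very definition $\psi(\eta)=0$ this last integral equals $b-\eta$, whence (\ref{probaextinction}). (Equivalently, $1-\eta/b$ is the smallest fixed point of the offspring generating function $f$ of the embedded Bienaym\'e--Galton--Watson process, since $f(1-\eta/b)=b^{-1}\int e^{-\eta r}\Lambda(dr)=1-\eta/b$.)

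For (\ref{premier_lemme}) I would first get convergence in law and then upgrade it. Conditional on $\{X(t)\ne0\}$, formula (\ref{prop2loideX(t)}) says $X(t)$ is geometric with success probability $1/W(t)$; since a geometric variable with vanishing success probability $p$, rescaled by $p$, converges to a standard exponential, $X(t)/W(t)$ converges in law to $\text{Exp}(1)$, and because $W(t)/e^{\eta t}\to 1/c$ this means $e^{-\eta t}X(t)$ converges in law to $\text{Exp}(c)$; as $\{X(t)\ne0\}\downarrow\ext^c$ the conditioning passes to $\ext^c$. To turn this into almost-sure convergence I would introduce the nonnegative intrinsic (Nerman-type) martingale $M_t$ of the splitting tree, in which each individual alive at time $t$ is weighted by $e^{-\eta t}$ times a harmonic function of its residual lifetime; martingale convergence gives $M_t\to M_\infty$ a.s., and a renewal/law-of-large-numbers argument over the population identifies the a.s. limit of $e^{-\eta t}X(t)$ with a deterministic multiple of $M_\infty$. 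The distributional limit just computed then forces this limit to be $0$ on $\ext$ and $\text{Exp}(c)$ on $\ext^c$.

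For (ii) I would first record the elementary reduction
$$\Esp\!\left[\left(\lop S\right)^2\Big|\ext^c\right]=2\int_0^\infty v\,\p(S>e^v\mid\ext^c)\,dv,\qquad S:=\supt e^{-\eta t}X(t),$$
so that it suffices to prove $\p(S>K\mid\ext^c)=o\big((\log K)^{-2}\big)$ as $K\to\infty$; any genuinely polynomial decay $\p(S>K)\le CK^{-\alpha}$ would be more than enough. The starting observation is that, by the geometric law (\ref{prop2loideX(t)}), every one-dimensional marginal has an exponential tail: uniformly in $t\ge t_0$, $\p(e^{-\eta t}X(t)>K\mid X(t)\ne0)\le e^{-c'K}$ for some $c'\in(0,c)$. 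The problem is thus entirely a matter of controlling the supremum over the continuum of times without the help of a second moment. Here I would use the spine decomposition: changing measure by $M_t$ produces a size-biased tree with a distinguished spine, off which independent ordinary subtrees are grafted, and $S$ is dominated by a sum, indexed by the grafting points along the spine, of independent contributions each carrying the exponential tail above. Since the spine is a renewal process, a maximal inequality for this decomposition converts the exponential marginal tails into a tail for $S$ decaying faster than any inverse power of $\log K$, which is exactly what (\ref{logsupXt}) requires.

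The main obstacle is the maximal estimate of the last step. A one-line Doob inequality would give $\p(\sup_t M_t>K)\le C/K$ for the nonnegative martingale $M_t$, but $e^{-\eta t}X(t)$ is not dominated by $M_t$: individuals with short residual lifetime are counted in $X(t)$ while contributing almost nothing to $M_t$, so the weighting that makes $M_t$ a martingale degenerates exactly where $X(t)$ is largest. Reconciling the unweighted count $X(t)$ with the harmonically weighted martingale, uniformly over $t$ and in the absence of any control on $\int r^2\,\Lambda(dr)$, is the crux; it is precisely because only a logarithmic amount of integrability survives this reconciliation that one obtains the square-logarithmic bound (\ref{logsupXt}) rather than finiteness of $\Esp[S\mid\ext^c]$.
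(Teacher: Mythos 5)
Your part (i) is essentially sound, and in places more self-contained than the paper: deriving (\ref{probaextinction}) by letting $t\to\infty$ in (\ref{prop1loideX(t)}) with the Tauberian asymptotics $W(t)\sim e^{\eta t}/c$ and the identity $\psi(\eta)=0$, and getting the exponential limit law from the geometric law (\ref{prop2loideX(t)}), replaces the paper's citations of Lambert by a direct computation. For the upgrade to almost sure convergence you take the same route as the paper (Nerman's theorem), but where the paper verifies Nerman's two conditions explicitly (with $h(t)=e^{-\eta t}$ and $g(t)=e^{-\beta t}$, $0<\beta<\eta$), you only gesture at ``a renewal/law-of-large-numbers argument''; that argument \emph{is} Nerman's theorem, so its hypotheses must be checked — a repairable omission. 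One slip in your reduction for (ii): $\p(S>K\mid\ext^c)=o((\log K)^{-2})$ is \emph{not} sufficient, since then $v\,\p(S>e^v\mid\ext^c)$ could decay like $1/(v\log v)$, which is not integrable; your fallback criteria (polynomial decay, or decay faster than every power of $\log K$) are the correct targets.

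The genuine gap is in (ii), and it sits exactly where your last paragraph admits it does. Your plan is: exponential tails for each marginal (correct, and uniform in $t\geq t_0$), then a ``maximal inequality'' obtained by changing measure with the additive martingale $M_t$, yielding a single distinguished spine with independent subtrees grafted along it. Two problems. First, under that size-biased measure the subtrees hanging off the spine are \emph{unconditioned} copies of the original supercritical process, so each contributes to $S=\supt e^{-\eta t}X(t)$ a time-shifted copy of $S$ itself: bounding the tail of $S$ by a sum of such contributions is circular. Moreover, transferring any estimate from the size-biased measure back to $\p$ costs a factor $1/M_\infty$, i.e. small-ball control of the martingale limit, which you also lack. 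Second, even granting the decomposition, the passage from exponential marginal tails to a tail bound for the supremum is precisely the statement to be proved; you assert it and then concede in the final paragraph that this ``reconciliation'' is the unresolved crux. The paper's proof avoids both problems by using Lambert's skeleton decomposition \emph{conditional on non-extinction}: $X_t=X_t^\infty+X_t^d+X_t^g$, where $X^\infty$ is a Yule$(\eta)$ backbone and all grafted bushes are conditioned on \emph{extinction}, hence subcritical with total progeny of finite mean. Each bush's contribution at every time is bounded by its total progeny, which eliminates the supremum over time inside the bushes; the only supercritical object left is the Yule backbone, whose supremum has a second moment by Doob's inequality; and the square-log moments then follow from two elementary lemmas (Lemma \ref{supmoyenne} on suprema of empirical means with finite mean summands, Lemma \ref{suppoisson} on $\sup_{t>0}A_t/t$ for a Poisson process), plus renewal estimates for the left-grafted times. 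It is this structural choice — finite, extinction-conditioned bushes on a Yule skeleton, rather than ordinary subtrees along a single spine — that makes the argument non-circular, and it is the ingredient your proposal is missing.
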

We remind that $c=\pse$ and then depends on the measure $\Lambda(\cdot)$.
The proof of the last assertion requires involved arguments using spine decomposition of splitting trees. Proposition \ref{propCMJ}, which will be proved in Section \ref{preuvepropCMJ}, is known in a particular case: if the lifetime $\Lambda(\cdot)/b$ has an exponential density with parameter $d$, $(X(t),t\geq0)$ is a Markovian birth and death process with birth rate $b$ and death rate $d<b$. In that case, $\eta=b-d$, $c=1-d/b=\p(\ext^c)$ and integrability of $\supt(e^{-\eta t}X(t))$ stems from Doob's maximal inequality.\\

We now define the immigration model: let $\theta$ be a positive number and $0=T_0<T_1<T_2<\cdots$ be the points of a Poisson process of
rate $\theta$. At each time $T_i$, we assume that a new individual immigrates and starts a new population whose size evolves like $X$, independently of the other populations. That
is, if for $i\geq1$ we call $(Z^i(t),t\geq 0)$ the $i$th oldest
family (the family which was started at $T_i$) then
$Z^{i}(t)=X_{i}(t-T_i)\1_{\{t\geq T_i\}}$ where $X_1,X_2,\dots$ are copies of $X$ and $(X_i,i\geq1)$ and $(T_i,i\geq1)$ are
independent.
This immigration model is a generalization of S. Karlin and J. McGregor's model \cite{Kar_McGreg67} in the case of general lifetimes.

\begin{figure}[!ht]
\begin{center}
\includegraphics{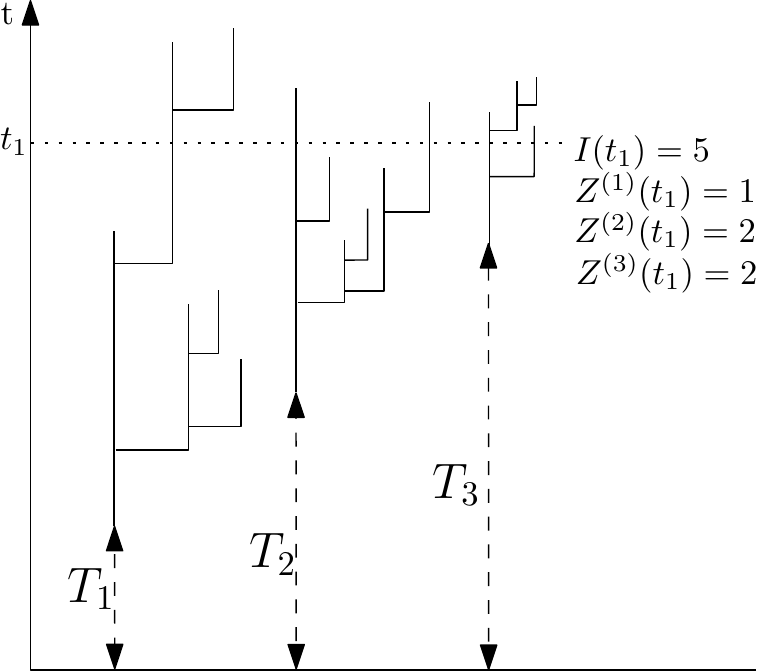}
\caption{splitting trees with immigration. The vertical axis is time, horizontal axis shows filiation. At time $t_1$, three populations are extant.}
\end{center}
\end{figure}

For $i\geq1$, define $(Z^{(i)}(t),t\geq0)$ as the $i$th oldest family among the surviving populations and $T^{(i)}$ its birthdate. In particular, by thinning of Poisson point process, $(T^{(i)},i\geq1)$ is a Poisson point process with parameter $\theta\eta/b$ thanks to (\ref{probaextinction}).

We are now interested in the joint behavior of the surviving families $(Z^{(i)}(t),t\geq0)$ for $i\geq1$:
{\setlength\arraycolsep{2pt}
\begin{eqnarray}
e^{-\eta t}Z^{(i)}(t)&=&e^{-\eta T^{(i)}}e^{-\eta \left(t-T^{(i)}\right)}Z^{(i)}(t)\nonumber\\
&\overset{(d)}=&e^{-\eta T^{(i)}}e^{-\eta \left(t-T^{(i)}\right)}X_{(i)}(t-T^{(i)})\1_{\{T^{(i)}\leq t\}}\nonumber
\end{eqnarray}}
As in (\ref{premier_lemme}), denote by
$E_i:=\lim_{t\rightarrow\infty}e^{-\eta t}X_{(i)}(t)$ for $i\geq1$. Thus
$E_1,E_2,\dots$ are i.i.d. exponential r.v. with parameter $c$.
Moreover, the sequences $(E_i,i\geq1)$ and $(T^{(i)},i\geq1)$ are
independent.
It follows that $e^{-\eta t}Z^{(i)}(t)\rightarrow e^{-\eta T^{(i)}}E_i$ a.s. as $t\rightarrow\infty$. We record this in the following
\begin{prop}\label{conv pop}
$$e^{-\eta t}(Z^{(1)}(t),Z^{(2)}(t),\dots)\underset{t\rightarrow\infty}\longrightarrow(e^{-\eta T^{(1)}}E_1,e^{-\eta T^{(2)}}E_2,\dots)  \quad\textrm{a.s.}$$ where the $E_i$'s are independent copies of $E$ and independent of the $T^{(i)}$'s.
\end{prop}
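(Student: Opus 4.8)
The plan is to reduce the vector statement to a coordinate-wise almost sure convergence, which is an immediate consequence of Proposition \ref{propCMJ}, and then to read off the independence structure of the limit from the marking theorem for Poisson processes. Since $\R^{\Nat}$ carries the product topology, almost sure convergence of the sequence of vectors is equivalent to almost sure convergence of each coordinate together with a countable intersection of the corresponding full-probability events; so it suffices to treat each index $i\geq1$ separately.

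Fix $i\geq1$. First I would record that the thinned process $(T^{(i)},i\geq1)$, being Poisson of rate $\theta\eta/b$, has infinitely many points almost surely, so $T^{(i)}<\infty$ a.s. On the event $\{t>T^{(i)}\}$, which holds for all $t$ large enough, the indicator in the displayed decomposition equals $1$ and $t-T^{(i)}\to\infty$. By construction $X_{(i)}$ is a surviving family, i.e. it is distributed as $X$ conditioned on $\ext^c$; hence (\ref{premier_lemme}) applies and gives
\[
e^{-\eta s}X_{(i)}(s)\overset{\textrm{a.s.}}{\underset{s\to\infty}\longrightarrow}E_i,
\]
where $E_i$ is exponential with parameter $c$. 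Specializing to $s=t-T^{(i)}$ and multiplying by the finite, $\sigma(T^{(i)})$-measurable factor $e^{-\eta T^{(i)}}$, which acts as a (path-dependent) constant as $t\to\infty$, yields $e^{-\eta t}Z^{(i)}(t)\to e^{-\eta T^{(i)}}E_i$ almost surely. Intersecting the resulting countably many full-probability events gives the claimed joint convergence.

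It remains to identify the independence structure of the limiting vector. Here I would mark each immigration point $T_j$ with the pair consisting of its survival indicator and the rescaled limit $\lim_t e^{-\eta t}X_j(t)$ of the family it starts; since the copies $X_1,X_2,\dots$ are i.i.d. and independent of $(T_j)$, these marks are i.i.d. and independent of the underlying Poisson process. Selecting the surviving families amounts to keeping the points whose survival mark equals $1$, and the marking (thinning) theorem then shows that the retained birthdates $(T^{(i)})$ form a Poisson process of rate $\theta\eta/b$, independent of the retained limits $(E_i)$, the latter being i.i.d. with the law of $E$ conditioned on $\ext^c$, namely exponential with parameter $c$.

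Strictly speaking this proposition carries no serious obstacle: it is essentially bookkeeping built on Proposition \ref{propCMJ} and Poisson thinning. The only point requiring a little care is the legitimacy of applying the single-family limit theorem to a family that carries a random birthdate and then asserting that its limit is independent of that birthdate; both are secured by the independence of $(X_i)$ from $(T_i)$ together with the marking theorem, as above.
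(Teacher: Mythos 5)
Your proof is correct and takes essentially the same route as the paper: the paper's inline argument likewise writes $e^{-\eta t}Z^{(i)}(t)=e^{-\eta T^{(i)}}e^{-\eta(t-T^{(i)})}X_{(i)}(t-T^{(i)})\1_{\{T^{(i)}\leq t\}}$, applies the almost sure limit (\ref{premier_lemme}) of Proposition \ref{propCMJ}(i) to each surviving family, and gets the rate $\theta\eta/b$ of $(T^{(i)},i\geq1)$ and its independence from the limits $(E_i,i\geq1)$ by Poisson thinning. Your extra care about coordinate-wise convergence in the product topology and the explicit use of the marking theorem merely spells out what the paper leaves implicit.
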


For $t\geq0$, let $I(t)$ be the size of the total population at time $t$
$$I(t)=\sum_{i\geq1}Z^i(t).$$
The process $(I(t),t\geq0)$ is a non-Markovian continuous-time branching process with immigration.
\begin{thm}\label{convpresquesureimmig}
\begin{enumerate}[(i)]
\item For $t$ positive, $I(t)$ has a negative binomial distribution with parameters $1-W(t)^{-1}$ and $\theta/b$.
 i.e. for $s\in[0,1]$, its generating function is
$$G_t(s):=\Esp\left[s^{I(t)}\right]=\left(\frac{W(t)^{-1}}{1-s(1-W(t)^{-1})}\right)^{\theta/b}.$$

\item We have $$I:=\lim_{t\rightarrow\infty}e^{-\eta t}I(t)=\sum_{i\geq1}e^{-\eta T^{(i)}}E_i  \textrm{ a.s.}$$ and $I$ has a Gamma distribution $\Gamma(\theta/b,c)$ i.e the density of $I$ with respect to Lebesgue measure is $$g(x)=\frac{c^{\theta/b}x^{\theta/b-1}e^{-c x}}{\Gamma(\theta/b)},\ x>0.$$
 \end{enumerate}
\end{thm}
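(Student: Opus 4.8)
The plan is to establish (i) by a Poisson exponential-formula computation that ultimately rests on a renewal equation for $W$, and (ii) by upgrading the pathwise convergence of Proposition \ref{conv pop} to an almost-sure statement via a domination argument, before identifying the limit law through a Laplace transform. For (i), I would condition on the immigration epochs. Since $X_1,X_2,\dots$ are i.i.d. copies of $X$, independent of $(T_i)$, and an immigrant arriving at time $r\in[0,t]$ contributes a population of age $t-r$, the exponential formula for Poisson point processes yields
\[ G_t(s)=\exp\left(-\theta\int_0^t\bigl(1-\Esp[s^{X(u)}]\bigr)\,du\right). \]
Integrating the conditional law (\ref{prop2loideX(t)}) against the lifespan law $\Lambda(\cdot)/b$ shows that $X(u)$ is, conditionally on being nonzero, geometric with success probability $1/W(u)$, so $\Esp[s^{X(u)}]$ is explicit up to the factor $\int_0^u W(u-x)\,\Lambda(dx)$. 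Differentiating the target identity $\log G_t(s)=\frac{\theta}{b}\bigl(-\log W(t)-\log(1-s(1-W(t)^{-1}))\bigr)$ in $t$ then collapses the whole statement to the renewal equation
\[ W'(t)+\int_0^t W(t-x)\,\Lambda(dx)=bW(t), \]
which I would verify by Laplace transform: it is equivalent to $\psi(\lambda)\intpos e^{-\lambda x}W(x)\,dx=1$ together with the normalization $W(0)=1$, the latter following from $\psi(\lambda)\sim\lambda$ as $\lambda\to\infty$ and the initial-value theorem. The boundary case $t=0$ is consistent, since $W(0)=1$ forces $G_0(s)=1$.

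For (ii) I first prove the almost-sure convergence. Writing $e^{-\eta t}I(t)=\sum_{i\geq1}e^{-\eta t}Z^i(t)$ over all families, each term converges almost surely as $t\to\infty$ by Proposition \ref{propCMJ}(i)---to $0$ when the family goes extinct and to a positive multiple of its exponential limit otherwise---so that only surviving families persist and their total is $\sum_{i\geq1}e^{-\eta T^{(i)}}E_i$ as in Proposition \ref{conv pop}. To interchange the limit with the infinite sum I would dominate each term, uniformly in $t$, by $e^{-\eta T_i}S_i$ where $S_i:=\supt e^{-\eta t}X_i(t)$, and prove $\sum_i e^{-\eta T_i}S_i<\infty$ almost surely. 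Here Proposition \ref{propCMJ}(ii) is decisive: the bound $\Esp[(\lop S_i)^2]<\infty$ gives, via Markov's inequality and Borel--Cantelli, that $\lop S_i\leq\eps i$ eventually for every $\eps>0$, which combined with $T_i/i\to1/\theta$ makes the series geometrically summable once $\eps$ is small. Dominated convergence for series then gives $e^{-\eta t}I(t)\to\sum_{i\geq1}e^{-\eta T^{(i)}}E_i$ almost surely.

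It remains to identify the law of $I$. Since $(T^{(i)})$ is a Poisson process of rate $\theta\eta/b$ and the independent $E_i$ are exponential with parameter $c$, Campbell's formula gives
\[ \Esp[e^{-\lambda I}]=\exp\left(-\frac{\theta\eta}{b}\intpos\Bigl(1-\frac{c}{c+\lambda e^{-\eta s}}\Bigr)\,ds\right), \]
and the substitution $v=e^{-\eta s}$ evaluates the integral to $\eta^{-1}\log\frac{c+\lambda}{c}$, whence $\Esp[e^{-\lambda I}]=(c/(c+\lambda))^{\theta/b}$, the Laplace transform of $\Gamma(\theta/b,c)$. The main obstacle is the interchange of limit and sum in (ii): the cumulative contribution of the infinitely many (mostly soon-extinct) families must be controlled uniformly in time, and it is precisely the second-moment log-integrability of Proposition \ref{propCMJ}(ii) that powers the Borel--Cantelli estimate making $\sum_i e^{-\eta T_i}S_i$ almost surely summable.
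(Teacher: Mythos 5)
Your proposal has one genuine gap, in the domination argument for the almost-sure convergence in (ii). You dominate \emph{every} family, extinct or not, by $e^{-\eta T_i}S_i$ with $S_i:=\supt\left(e^{-\eta t}X_i(t)\right)$, and then invoke Proposition \ref{propCMJ}(ii) as if it gave $\Esp[(\lop S_i)^2]<\infty$ for the \emph{unconditioned} law of $S_i$. But Proposition \ref{propCMJ}(ii) is stated conditionally on $\ext^c$, so as written your Markov/Borel--Cantelli estimate has no moment bound to feed on for the (infinitely many) families that go extinct. The gap is fixable in a few lines: conditionally on extinction, $X_i$ is distributed as a subcritical splitting tree, so $S_i\leq\sup_{t\geq0}X_i(t)\leq Y_i$, the total progeny, which has finite conditional mean, and the pointwise bound $(\lop y)^2\leq C(1+y)$ then yields $\Esp\left[(\lop S_i)^2\,|\,\ext_i\right]<\infty$, hence the unconditional second log-moment you need. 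The paper sidesteps this issue entirely by splitting $e^{-\eta t}I(t)$ into surviving and extinct families: dominated convergence (via its Lemma \ref{series}, which is exactly your Borel--Cantelli plus law-of-large-numbers argument) is applied only to the surviving families indexed by $(T^{(i)})$, where Proposition \ref{propCMJ}(ii) applies verbatim, while the extinct families are disposed of by bounding their total contribution by the compound Poisson process $\sum_{i\geq1}Y_i\1_{\ext_i}\1_{\{T_i\leq t\}}$, which has finite mean, grows linearly, and so vanishes after multiplication by $e^{-\eta t}$.

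Apart from this, your argument is sound and close to the paper's. For (i), your route is essentially identical: the exponential formula you invoke is the same identity the paper obtains by conditioning on $N_t=k$ and using uniform order statistics, and both arguments reduce to $W(0)=1$ and the renewal identity $W\star\Lambda=bW-W'$ proved by Laplace transform (Lemma \ref{props_de_W}); whether one then integrates in $t$ (the paper) or differentiates the target identity in $t$ (you) is cosmetic. For the identification of the law of $I$, however, you take a genuinely different and valid route: you apply Campbell's exponential formula directly to the limiting Poissonian representation $\sum_{i\geq1}e^{-\eta T^{(i)}}E_i$ (a computation the paper effectively performs later, via Lemma \ref{ppp}, when deriving the GEM limit), whereas the paper passes to the limit in the finite-time transform, writing $\Esp[e^{-ae^{-\eta t}I(t)}]=G_t(e^{-ae^{-\eta t}})$ and using part (i) together with $e^{-\eta t}W(t)\to1/c$. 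Your route makes (ii) independent of (i); the paper's requires no analysis of the limiting point process and gets the asymptotics for free from the scale function.
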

The result (i) is a generalization of a result by D.G. Kendall \cite{Kendall1948}
which was the particular Markovian case of a birth, death and immigration process. The proof we give in Section \ref{preuvetheoprincipal} uses equations (\ref{prop1loideX(t)}) and (\ref{prop2loideX(t)}) about the law of $X(t)$.

There exist other proofs of the almost sure convergence in (ii), but they require stronger assumptions. For example, P. Jagers \cite{Jagers_BP_with_bio} gives a proof for the convergence of general branching processes with immigration under the hypothesis that the variance of the number of children per individual $\xi(\infty)$ is finite. In our case, this is only true if $\int_{(0,\infty)} r^2\Lambda(dr)<\infty$. In the particular Markovian case described previously, the proof is also easier since $(e^{-\eta t}X(t),t\geq0)$ is a non-negative martingale \cite[p.111]{Athreya_Ney}, $(e^{-\eta t}I(t),t\geq0)$ is a non-negative submartingale and both converge a.s.
 In the proof we give in Section \ref{preuvetheoprincipal}, the only assumption we use about the measure $\Lambda$ is that its mass is finite. The proof is based on Proposition \ref{propCMJ}(ii).

In the following, we will consider different kinds of metacommunity where immigrants are chosen and will give results about abundances of surviving populations.
 In Model I, there is a discrete spectrum with zero macroscopic relative abundances: when a new family is initiated, it is of a type different from those of previous families.
The following theorem yields the asymptotic behaviors of the fractions of the surviving subpopulations ranked by decreasing ages in the total population:
\begin{thm}[Model I]\label{thm principal splitting trees}
$$\lim_{t\rightarrow\infty}I(t)^{-1}(Z^{(1)}(t),Z^{(2)}(t),\dots)=(P_1,P_2,\dots)\textrm{\quad a.s.}$$
where the law of $(P_1,P_2,\dots)$ is a GEM distribution with parameter $\theta/b$.
 In other words, for $i\geq1$ $$P_i\overset{(d)}=B_i\prod_{j=1}^{i-1}(1-B_j)$$ and $(B_i)_{i\geq1}$ is a sequence of i.i.d. random variables with law Beta($1$,$\theta/b$) whose density with respect  to Lebesgue measure is $$\frac{\theta}{b}(1-x)^{\theta/b-1}\1_{[0,1]}(x).$$
\end{thm}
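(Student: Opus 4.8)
The plan is to reduce the statement to identifying the law of the limiting sequence, and then to expose its stick-breaking structure by a regeneration argument at the first surviving immigration time. Combining Proposition \ref{conv pop} with Theorem \ref{convpresquesureimmig}(ii) and dividing the two almost sure limits gives, for each $i\geq1$,
\[
\frac{Z^{(i)}(t)}{I(t)}=\frac{e^{-\eta t}Z^{(i)}(t)}{e^{-\eta t}I(t)}\underset{t\to\infty}{\longrightarrow}\frac{A_i}{S}=:P_i\qquad\textrm{a.s.},
\]
where I abbreviate $A_i:=e^{-\eta T^{(i)}}E_i$ and $S:=\sum_{j\geq1}A_j=I\sim\Gamma(\theta/b,c)$. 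So everything reduces to the law of the age-ordered vector $(A_i/S)_{i\geq1}$. Writing $R_i:=\sum_{j\geq i}A_j$ and $B_i:=A_i/R_i$, the telescoping identity $\prod_{k=1}^{i-1}(1-B_k)=R_i/R_1$ turns $P_i=A_i/R_1$ into $P_i=B_i\prod_{k=1}^{i-1}(1-B_k)$, so it is enough to show that $(B_i)_{i\geq1}$ is i.i.d. with $\mathrm{Beta}(1,\theta/b)$ marginals.

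For the regeneration step, I use that $(T^{(i)})_{i\geq1}$ is a Poisson process of rate $\theta\eta/b$, so $(T^{(i)}-T^{(1)})_{i\geq2}$ is again Poisson of the same rate and independent of $(T^{(1)},E_1)$. Setting $A'_i:=e^{-\eta(T^{(i+1)}-T^{(1)})}E_{i+1}$, one has $A_{i+1}=e^{-\eta T^{(1)}}A'_i$, the sequence $(A'_i)_{i\geq1}$ is independent of $(T^{(1)},E_1)$ and distributed as $(A_i)_{i\geq1}$, and $S':=\sum_iA'_i\overset{(d)}{=}S\sim\Gamma(\theta/b,c)$. Consequently $R_2=e^{-\eta T^{(1)}}S'$ and the factor $e^{-\eta T^{(1)}}$ cancels:
\[
B_1=\frac{A_1}{A_1+R_2}=\frac{E_1}{E_1+S'}.
\]
Since $E_1\sim\mathrm{Exp}(c)=\Gamma(1,c)$ is independent of $S'\sim\Gamma(\theta/b,c)$, the Beta--Gamma algebra gives $B_1\sim\mathrm{Beta}(1,\theta/b)$, the correct marginal. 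The same cancellation shows $B_{i+1}=A'_i/\sum_{k\geq i}A'_k$, so $(B_{i+1})_{i\geq1}$ is the stick-breaking sequence of $(A'_i)$ and hence has the same law as $(B_i)_{i\geq1}$.

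The main obstacle is the independence of $B_1$ from $(B_2,B_3,\dots)$, which together with the previous distributional identity lets me iterate and conclude that $(B_i)$ is i.i.d., i.e. GEM($\theta/b$). Because $E_1$ is independent of $(A'_i)$, while $B_1$ sees $(A'_i)$ only through $S'$ and $(B_{i+1})$ only through the scale-invariant ratios of $(A'_i)$, this independence reduces to the single fact that $S$ is independent of the age-ordered shape $(A_i/S)_{i\geq1}$. To establish it I would pass to the point process: by the mapping theorem $\{(T^{(i)},E_i)\}$, of intensity $(\theta\eta/b)\,dt\,ce^{-cy}\,dy$, is sent by $(t,y)\mapsto ye^{-\eta t}$ to a Poisson process $\{A_i\}$ with intensity $\frac{\theta}{b}\,a^{-1}e^{-ca}\,da$, the L\'evy measure of $\Gamma(\theta/b,c)$; the classical Gamma--Dirichlet independence then makes the ranked normalised jumps a PD($\theta/b$) vector independent of $S$. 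It remains to control the age-ordering: conditionally on the sizes $(a_j)$ the jump times are independent with $e^{\eta T^{(j)}}-1\sim\mathrm{Exp}(ca_j)$, so ordering the families by age amounts to ranking independent exponentials with rates proportional to $a_j$, and the resulting permutation probabilities $\prod_{k}\frac{a_{\sigma(k)}}{\sum_{l\ge k}a_{\sigma(l)}}$ depend only on the normalised sizes $p_j=a_j/S$ (the rate constant $c$ and the total $S$ cancel in the ratios). This one computation does two things at once: it shows the age-ordering is scale-free, giving $(A_i/S)\perp S$ and hence the desired independence of the $B_i$, and it identifies the age-ordering with the size-biased permutation of PD($\theta/b$), whose law is precisely GEM($\theta/b$).
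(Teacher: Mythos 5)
Your proposal is correct, and it shares with the paper only its first two steps. Like the paper, you reduce the problem via Proposition \ref{conv pop} and Theorem \ref{convpresquesureimmig}(ii) to identifying the law of $(A_i/S)_{i\geq1}$, $A_i=e^{-\eta T^{(i)}}E_i$, and your mapping-theorem computation showing that $\{A_i\}$ is a Poisson point process with intensity $\frac{\theta}{b}\,a^{-1}e^{-ca}\,da$ is exactly the paper's Lemma \ref{ppp} specialized to $\rho=\theta\eta/b$, $r=\eta$, $F(v)=e^{-cv}$. The proofs then diverge: the paper simply quotes \cite[p.~89]{Bertoin2006} for the statement that the normalized atoms, \emph{in the age order}, form a GEM$(\theta/b)$ vector independent of $\sigma$, whereas you prove this. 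Your regeneration at $T^{(1)}$ — giving $B_1=E_1/(E_1+S')$, hence the Beta$(1,\theta/b)$ marginal by Beta--Gamma algebra, and the shift identity $(B_{i+1})_{i\geq1}\overset{(d)}{=}(B_i)_{i\geq1}$ — is a genuine stick-breaking argument absent from the paper, and your final computation (conditionally on the sizes, $e^{\eta T^{(j)}}-1\sim\mathrm{Exp}(ca_j)$ independently, so the age order is a size-biased permutation with scale-free transition probabilities $\prod_k a_{\sigma(k)}/\sum_{l\geq k}a_{\sigma(l)}$) supplies precisely the point that the paper's citation leaves implicit: atoms of a Poisson process carry no canonical order, so to invoke any PD/GEM result one must know that ordering by age amounts to size-biased ordering. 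I checked the delicate steps: the conditional law of the times given the sizes is as you claim (the joint intensity $\theta'ce^{\eta t}e^{-cae^{\eta t}}\,dt\,da$ disintegrates over the marginal $\frac{\theta}{b}a^{-1}e^{-ca}\,da$ into exponential kernels), and your induction from the three facts ($B_1$ Beta, $B_1$ independent of the tail, tail distributed as the whole sequence) does yield full i.i.d.-ness. What the paper's route buys is brevity, by leaning on the literature; what yours buys is self-containedness (modulo Kingman's Gamma--Dirichlet independence and the Beta--Gamma algebra), an explicit justification of the age-order-equals-size-biased-order step, the independence of $(P_1,P_2,\dots)$ from $\sigma$ as a by-product, and a proof of the theorem in exactly the stated form (i.i.d. Beta$(1,\theta/b)$ stick-breaking) without even needing the classical fact that the size-biased permutation of PD$(\theta/b)$ is GEM$(\theta/b)$ — though, as you note, your last computation makes that one-step conclusion available as well.
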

This result was proved by S. Tavar\'e \cite{Tavaré_BirthProcImm} in the case where $\Lambda(dr)=\delta_\infty(dr)$ (pure birth process); it is the exponential case defined previously with $b=1$ and $d=0$. His result is robust because we see that in our more general case, the limit distribution does not depend on the lifespan distribution but only on the immigration-to-birth ratio $\theta/b$.
In biogeography, a typical question is to recover data about population dynamics (immigration times, law of lifespan duration) from the observed diversity patterns. In this model, we see that there is a loss of information about the lifespan duration. However, the ratio $\theta/b$ can be estimated thanks to the species abundance distribution.
We will prove Theorem \ref{thm principal splitting trees} in Subsection \ref{bloup}.

In Model II, we consider a discrete spectrum with nonzero macroscopic relative abundances. Contrary to Model I where types were always new, they are now given a priori and types of immigrants are independently drawn according to some probability $p=(p_i,i\geq1)$. When a population is initiated (i.e. at each time of the $\theta$-Poisson point process), it is of type $i$ with probability $p_i>0$.

\begin{thm}[Model II]\label{thm multitype}
For $ i\geq 1$, denote by $I_i(t)$ the number of individuals of type $i$ at time $t$  and set $\theta_i:=\frac{\theta p_i}{b}$.
Then
$$\lim_{t\rightarrow\infty}I(t)^{-1}(I_1(t),I_2(t),\dots)=(P'_1,P'_2,\dots)\textrm{\quad a.s.}$$
where for $i\geq1$ $$P'_i\overset{(d)}=B'_i\prod_{j=1}^{i-1}(1-B'_j)$$ and $(B'_i)_{i\geq1}$ is a sequence of independent random variables such that $$B'_i\sim \textrm{Beta}\left(\theta_i,\frac{\theta}{b}\sum_{j\geq i+1}p_j\right).$$

In particular, for $i\geq 1$, $P'_i$ has a Beta distribution $B(\theta_i,\theta/b-\theta_i)$.
\end{thm}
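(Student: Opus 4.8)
The plan is to reduce the statement to an application of Theorem \ref{convpresquesureimmig} together with the beta--gamma algebra. The key observation is that, since each immigrant receives its type independently according to $p=(p_i)$, colouring the $\theta$-Poisson point process of immigration times by type splits it into \emph{independent} Poisson point processes, the $i$-th having rate $\theta p_i$. Consequently, for each fixed $i$ the process $(I_i(t),t\geq0)$ is itself an immigration process of exactly the kind studied in Theorem \ref{convpresquesureimmig}, but with immigration rate $\theta p_i$ in place of $\theta$, and these processes are mutually independent across $i$.

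First I would apply Theorem \ref{convpresquesureimmig}(ii), with $\theta$ replaced by $\theta p_i$, to each type separately. This yields, for every $i\geq1$, the almost sure convergence
$$e^{-\eta t}I_i(t)\underset{t\to\infty}{\longrightarrow}I^{(i)}:=\sum_{k}e^{-\eta T^{(i)}_k}E_k,$$
where the sum runs over the surviving type-$i$ families, together with the law $I^{(i)}\sim\Gamma(\theta_i,c)$ since $\theta p_i/b=\theta_i$. The independence of the colours propagates to independence of the family $(I^{(i)})_{i\geq1}$. Dividing by $e^{-\eta t}I(t)\to I$ and using $I=\sum_j I^{(j)}$, the ratios converge almost surely coordinatewise,
$$I(t)^{-1}I_i(t)=\frac{e^{-\eta t}I_i(t)}{e^{-\eta t}I(t)}\underset{t\to\infty}{\longrightarrow}\frac{I^{(i)}}{I}=:P'_i,$$
and since this holds simultaneously for the countable collection of coordinates, the stated joint almost sure convergence follows.

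It then remains to identify the law of $(P'_1,P'_2,\dots)$, where the main work lies. I would invoke the beta--gamma algebra: for independent $U\sim\Gamma(a,c)$ and $V\sim\Gamma(a',c)$, the quotient $U/(U+V)$ is $\mathrm{Beta}(a,a')$ and is independent of $U+V\sim\Gamma(a+a',c)$, the extension to a countable family of shapes summing to $\theta/b<\infty$ being justified via Laplace transforms. Setting $R_i:=\sum_{j\geq i}I^{(j)}\sim\Gamma(\sum_{j\geq i}\theta_j,c)$ and $B'_i:=I^{(i)}/R_i$, a telescoping identity gives
$$B'_i\prod_{j=1}^{i-1}(1-B'_j)=\frac{I^{(i)}}{R_i}\prod_{j=1}^{i-1}\frac{R_{j+1}}{R_j}=\frac{I^{(i)}}{R_1}=\frac{I^{(i)}}{I}=P'_i,$$
because $R_j-I^{(j)}=R_{j+1}$. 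Applying the beta--gamma algebra inductively to the decomposition $R_i=I^{(i)}+R_{i+1}$ shows that the $(B'_i)_{i\geq1}$ are mutually independent with $B'_i\sim\mathrm{Beta}(\theta_i,\sum_{j>i}\theta_j)=\mathrm{Beta}(\theta_i,\frac{\theta}{b}\sum_{j\geq i+1}p_j)$, which is the stick-breaking form claimed. Finally, writing $I=I^{(i)}+(I-I^{(i)})$ with $I-I^{(i)}\sim\Gamma(\theta/b-\theta_i,c)$ independent of $I^{(i)}$ gives at once $P'_i\sim\mathrm{Beta}(\theta_i,\theta/b-\theta_i)$.

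The principal obstacle is this last step: carefully establishing the mutual independence of the stick-breaking variables $B'_i$ in the infinite-dimensional setting of countably many types, which amounts to passing to the limit in the finite Gamma--Dirichlet correspondence and verifying that the tail sums $R_i$ remain genuine Gamma variables. By contrast, the reduction to independent immigration subprocesses and the coordinatewise convergence are comparatively routine once Theorem \ref{convpresquesureimmig} is in hand.
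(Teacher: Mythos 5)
Your reduction is exactly the paper's: colour the immigration Poisson process by type, so that the $I_i$ are independent immigration processes with rates $\theta p_i$, apply Theorem \ref{convpresquesureimmig}(ii) to each colour and to the whole process, and pass to ratios. Where you diverge is the identification of the limit law: the paper writes, for each finite $r$, $I_i/I=I_i/(I_1+\cdots+I_r+I^*)$ with $I^*\sim\Gamma(\theta/b-\overline{\theta}_r,c)$ independent of the first $r$ coordinates, computes the joint density of $(I_1/I,\dots,I_r/I)$ and recognizes the Dirichlet density of $(P'_1,\dots,P'_r)$; you instead run the beta--gamma algebra on the tail sums $R_i$, which produces the independent Beta variables $B'_i$ and the stick-breaking form directly, with no density computation. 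Both are valid, and the infinite-dimensional worry you single out as the ``principal obstacle'' is in fact routine: each $R_i$ is an increasing limit of partial sums of independent Gammas with common rate $c$ and summable shapes, hence Gamma by monotone convergence of Laplace transforms, and mutual independence of the $B'_i$ follows by the usual induction, since $(B'_i,R_i)$ is a function of $(I^{(i)},R_{i+1})$, independent of $(I^{(1)},\dots,I^{(i-1)})$, and $B'_i$ is independent of $R_i$ by the finite beta--gamma lemma.

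The genuine gap is a step you assert without proof and which your argument cannot do without: $I=\sum_j I^{(j)}$ almost surely. Here $I$ is the limit of $e^{-\eta t}I(t)$ given by Theorem \ref{convpresquesureimmig}(ii) applied to the whole process, while the $I^{(j)}$ are the limits of the coloured pieces; equating them is an interchange of the limit $t\to\infty$ with an infinite sum over types, and nothing in the coordinatewise convergences gives this for free. Without it, the ratio limit you obtain is $I^{(i)}/I$, and the telescoping identity $B'_i\prod_{j<i}(1-B'_j)=I^{(i)}/R_1$ identifies the law only if the denominator $I$ equals $R_1=\sum_j I^{(j)}$. The paper devotes an explicit argument to exactly this point and it is short: Fatou's lemma yields $I\geq\sum_j I^{(j)}$ a.s.\ (each finite partial sum $\sum_{j\leq r}e^{-\eta t}I_j(t)$ is dominated by $e^{-\eta t}I(t)$), and then $\Esp[I]=\theta/(bc)=\sum_j\theta_j/c=\Esp\bigl[\sum_j I^{(j)}\bigr]$ forces a.s.\ equality of the two nonnegative random variables. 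Add those two lines and your proof is complete; note that this interchange, not the countable beta--gamma correspondence, is the step that actually needs an argument.
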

The proof of this theorem will be done in  Subsection \ref{bloup2}. In this model, the limit only depend on $\theta/b$ and the metacommunity spectrum $(p_i,i\geq1)$.
\begin{rem} If the number of possible types $n$ is finite,
then $$\sum_{i=1}^{n}P'_i=\sum_{i=1}^n\frac{I_i}{I}=1,\ \
\sum_{i=1}^{n}{\theta}_i=\frac{\theta}{b}$$ and the joint density of
$(P'_1,\dots,P'_n)$ is
$$\frac{\Gamma(\theta/b)}{\prod_{i=1}^n\Gamma(\theta_i)} \left(\prod_{i=1}^{n-1} x_i^{\theta_i - 1}\1_{\{x_i>0\}}\right)(1-x_1-\cdots-x_{n-1})^{\theta_{n}-1}\1_{\{x_1+\dots+x_{n-1}<1\}}.$$
This is the joint density of a Dirichlet distribution $\textrm{Dir}(\theta_1,\dots,\theta_n)$.
\end{rem}

In Model III, we consider a continuous spectrum of possible types and we slightly modify the immigration process: when an individual arrives on the island, it starts a new population with an immigration rate proportional to its abundance on the metacommunity.
 More precisely, let $\Pi$ be a Poisson point process on $\R_+\times\R_+$ with intensity $dt\otimes xf(x)dx$ where $f$ is a nonnegative function such that $\theta:=\intpos xf(x)dx$ is finite. Then, write $\Pi:=((T_i,\Delta_i),i\geq 1)$ where  $T_1<T_2<\cdots$ are the times of a $\theta$-linear Poisson point process and $(\Delta_i,i\geq1)$ is a sequence of i.i.d random variables whose density is $\theta^{-1}xf(x)dx$ which is independent of $(T_i,i\geq1)$.
At time $T_i$, a new population starts out and it evolves like the continuous-time branching process with immigration defined for the two previous models with an immigration rate $\Delta_i$.The interpretation of this model is as follows:
for $x>0$, $f(x)$ represents the density of species with abundance $x$ in the metacommunity and at each immigration time, an individual of a species with abundance in $(x,x+dx)$ is chosen with probability $\frac{xf(x)}{\theta}dx$ proportional to its abundance in the metacommunity.

If $(Z^i(t),t\geq0)$ is the $i$-th oldest family,
$$Z^i(t)=I^i_{\Delta_i}(t-T_i)\1_{\{t\geq T_i\}},\quad t\geq0$$ where the $I^i_{\Delta_i}$'s are independent copies of $I_\Delta$, which, conditional on $\Delta$, evolves like the immigration process of the first two models with an immigration rate $\Delta$.
According to Theorem \ref{convpresquesureimmig}(ii), we know that $$e^{-\eta t}I_{\Delta}(t)\underset{t\rightarrow \infty}\longrightarrow G \textrm{ a.s.}$$ where conditional on $\Delta$, $G\sim\textrm{Gamma}(\Delta/b,c)$. We denote by $F$ its distribution tail $$F(v):=\p(G\geq v)=\intpos dx \frac{xf(x)}{\theta}\int_v^\infty \frac{e^{-ct}t^{x/b-1}c^{x/b}}{\Gamma(x/b)}dt.$$
Hence, we also have
\begin{prop}\label{noufr}
For $i\geq1$,
$$e^{-\eta t}Z^i(t)\underset{t\rightarrow \infty}\longrightarrow e^{-\eta T_i}G_i \textrm{ a.s.}
$$
where $(G_i,i\geq1)$ is a sequence of i.i.d. r.v. with the same distribution as $G$ and independent of $(T_i,i\geq1)$.
\end{prop}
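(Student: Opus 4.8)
The plan is to read Proposition \ref{noufr} as an immediate consequence of Theorem \ref{convpresquesureimmig}(ii), applied family by family after a time shift. Fix $i\geq1$. Since $T_i<\infty$ almost surely, the event $\{t\geq T_i\}$ eventually holds, and on it we may write
$$e^{-\eta t}Z^i(t)=e^{-\eta T_i}\,e^{-\eta(t-T_i)}I^i_{\Delta_i}(t-T_i),$$
so the whole question reduces to the behavior of $e^{-\eta s}I^i_{\Delta_i}(s)$ as $s=t-T_i\to\infty$.

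First I would condition on the marked point process $((T_j,\Delta_j),j\geq1)$. Once the realization of $\Delta_i$ is frozen, $I^i_{\Delta_i}$ is precisely an immigration process of the type treated in the first two models, but with the deterministic immigration rate $\Delta_i$; this is legitimate because the family processes $(I^j,j\geq1)$ are independent copies, independent of the marks and the arrival times. Theorem \ref{convpresquesureimmig}(ii), read with $\theta$ replaced by the fixed rate $\Delta_i$, then yields
$$e^{-\eta s}I^i_{\Delta_i}(s)\underset{s\rightarrow \infty}\longrightarrow G_i \textrm{ a.s.},$$
where, conditional on $\Delta_i$, the limit $G_i$ has the $\textrm{Gamma}(\Delta_i/b,c)$ law. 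Plugging this into the displayed identity and letting $t\to\infty$ gives $e^{-\eta t}Z^i(t)\to e^{-\eta T_i}G_i$ almost surely; removing the conditioning preserves the almost sure statement.

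It then remains to verify the distributional claims. The sequence $(G_i,i\geq1)$ is i.i.d.\ because each $G_i$ is the same measurable functional of the i.i.d.\ pairs $(I^i,\Delta_i)$, and its common law is that of $G$, whose tail $F$ was computed above by averaging the $\textrm{Gamma}(\Delta/b,c)$ law against the density $\theta^{-1}xf(x)\,dx$ of $\Delta$. Independence of $(G_i,i\geq1)$ from $(T_i,i\geq1)$ follows from the marking structure of $\Pi$: the marks $(\Delta_i)$ are independent of the times $(T_i)$ and the copies $(I^i)$ are independent of all of $\Pi$, so the $G_i$ inherit this independence.

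I expect the only genuinely delicate point to be the conditioning step, since Theorem \ref{convpresquesureimmig}(ii) is stated for a deterministic immigration rate while here the rate $\Delta_i$ is itself random. The care needed is to justify, via a regular conditional distribution or a Fubini argument on the product space carrying the copies $(I^i)$ and the marks $(\Delta_i)$, that almost sure convergence for each fixed rate upgrades to almost sure convergence for the random rate, and that the resulting limit carries the mixed Gamma law defining $G$. Everything else is a routine time shift together with the finiteness of $T_i$.
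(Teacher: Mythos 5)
Your proposal is correct and takes essentially the same route as the paper: the paper also deduces Proposition \ref{noufr} directly from Theorem \ref{convpresquesureimmig}(ii) applied conditionally on $\Delta$, combined with the time-shift identity $Z^i(t)=I^i_{\Delta_i}(t-T_i)\1_{\{t\geq T_i\}}$, which is why it introduces the proposition with ``Hence, we also have'' rather than giving a separate proof. The conditioning/Fubini point you single out as delicate is precisely what the paper leaves implicit in the phrase ``conditional on $\Delta$, $G\sim\textrm{Gamma}(\Delta/b,c)$'', so your added justification is a faithful expansion of the paper's argument rather than a departure from it.
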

We again denote by $I(t)$ the total population at time $t$:
$\displaystyle I(t):=\sum_{i\geq1}Z^i(t)$ and we obtain a result similar to Theorem \ref{convpresquesureimmig} concerning the asymptotic behavior of $I(t)$.

\begin{prop}\label{syco}If $\intpos x^2f(x)dx<\infty$ we have
$$e^{-\eta t}I(t)\overset{\textrm{a.s.}}{\underset{t\rightarrow \infty}\longrightarrow}\sum_{i\geq1}e^{-\eta T_i}G_i$$
and the Laplace transform of $\sigma:=\sum_{i\geq1}e^{-\eta T_i}G_i$ is
\begin{equation*}\label{Laplacesigma}\Esp\left[e^{-s\sigma}\right]=\exp\left(-\frac{\theta}{\eta}\intpos\frac{F(v)}{v}\left(1-e^{-sv}\right)dv\right).
\end{equation*}
Moreover, \begin{equation*}\Esp[\sigma]=\frac{1}{\eta bc}\intpos x^2f(x)dx<\infty.\end{equation*}
\end{prop}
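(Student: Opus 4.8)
The plan is to prove the almost-sure convergence, the Laplace transform formula, and the expectation formula in that order, treating $\sigma = \sum_{i\ge 1} e^{-\eta T_i} G_i$ as the value of a functional of the Poisson point process $((T_i, \Delta_i), i\ge 1)$.

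For the almost-sure convergence $e^{-\eta t} I(t) \to \sigma$, I would write $e^{-\eta t} I(t) = \sum_{i\ge 1} e^{-\eta T_i} \bigl(e^{-\eta(t-T_i)} I^i_{\Delta_i}(t-T_i)\bigr)\1_{\{t\ge T_i\}}$. Proposition~\ref{noufr} already gives the termwise limit $e^{-\eta(t-T_i)} I^i_{\Delta_i}(t-T_i) \to G_i$ a.s.\ for each fixed $i$, so the issue is interchanging the limit and the infinite sum, i.e.\ controlling the tail contribution uniformly in $t$. The natural tool is Proposition~\ref{propCMJ}(ii), or rather its analogue for the immigration process built on Theorem~\ref{convpresquesureimmig}(ii): I expect each family to satisfy a log-integrability bound on $\sup_{s\ge 0} e^{-\eta s} I^i_{\Delta_i}(s)$, so that, together with the fact that the $T_i$ are the points of a $\theta$-Poisson process (hence $e^{-\eta T_i}$ decays geometrically fast in $i$ on average), a Borel--Cantelli argument forces the tail $\sum_{i > N} e^{-\eta T_i}\sup_s(e^{-\eta s} I^i_{\Delta_i}(s))$ to vanish as $N\to\infty$ uniformly in $t$. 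The extra moment hypothesis $\intpos x^2 f(x)\,dx<\infty$ should enter precisely here, to guarantee that $\Esp[\sup_s e^{-\eta s} I_\Delta(s)]$ (averaged over $\Delta$) is finite and the summability is strong enough.

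For the Laplace transform I would exploit that $((T_i, G_i), i\ge 1)$ is itself a Poisson point process: the $T_i$ form a $\theta$-Poisson process on $\R_+$ and, conditionally, the $G_i$ are i.i.d.\ with law that of $G$, so the pairs have intensity $\theta\,dt\otimes \p(G\in dv)$. Then $\sigma$ is a linear functional of this Poisson process, and the exponential/Campbell formula gives
\begin{equation*}
\Esp\bigl[e^{-s\sigma}\bigr] = \exp\left(-\theta\intpos dt\, \Esp\bigl[1 - e^{-s e^{-\eta t} G}\bigr]\right).
\end{equation*}
The remaining work is to rewrite this integral in the stated form: substituting $v = e^{-\eta t} G$ (equivalently integrating in $t$ first and changing variables so that $\int_0^\infty dt\,(1-e^{-s e^{-\eta t}v}) = \eta^{-1}\int_0^v w^{-1}(1-e^{-sw})\,dw$ after a suitable change), and recognizing $F(v) = \p(G\ge v)$ via Fubini, should collapse the double integral into $\frac{\theta}{\eta}\intpos \frac{F(v)}{v}(1-e^{-sv})\,dv$. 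This is a routine but careful computation.

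For the mean, I would differentiate the exponent at $s=0$, or more directly compute $\Esp[\sigma] = \sum_i \Esp[e^{-\eta T_i}]\,\Esp[G_i]$ using independence. Since the $T_i$ are $\theta$-Poisson points, $\sum_i \Esp[e^{-\eta T_i}] = \theta\intpos e^{-\eta t}\,dt = \theta/\eta$, and $\Esp[G_i] = \Esp[\Delta]/(bc) = (bc)^{-1}\intpos \theta^{-1} x\cdot xf(x)\,dx$ since conditionally on $\Delta$ the variable $G$ is $\Gamma(\Delta/b, c)$ with mean $\Delta/(bc)$. Multiplying gives $\Esp[\sigma] = \frac{1}{\eta bc}\intpos x^2 f(x)\,dx$, finite exactly under the stated hypothesis.

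The main obstacle is the uniform tail control in the first step: the termwise convergence from Proposition~\ref{noufr} is immediate, but justifying the exchange of the limit $t\to\infty$ with the infinite sum requires a genuine uniform-integrability or dominated-convergence-type argument, and this is where the second-moment assumption on $f$ is indispensable and where the delicate log-integrability estimates from Proposition~\ref{propCMJ}(ii) must be leveraged.
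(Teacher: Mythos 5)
Your Laplace-transform and mean computations are correct and essentially the paper's own: the paper's Lemma \ref{ppp} is exactly the mapping-theorem calculation you sketch (image of $\theta\,dt\otimes\p(G\in dv)$ under $(t,v)\mapsto e^{-\eta t}v$, giving a Poisson point process with intensity $\frac{\theta}{\eta}\frac{F(v)}{v}dv$), followed by the exponential formula, and the mean follows either by differentiation at $0$ or by Campbell's formula as you say. The problem is the first step, the almost-sure convergence, and it sits exactly at the point you yourself flag as ``the main obstacle'' but do not fill. Dominating the series by $\sum_{i\geq1}e^{-\eta T_i}\sup_{s\geq0}\left(e^{-\eta s}I^i_{\Delta_i}(s)\right)$ and running a Borel--Cantelli-type argument (the paper's Lemma \ref{series}) requires the \emph{log}-moment $\Esp\left[\lop\supt\left(e^{-\eta t}I_\Delta(t)\right)\right]<\infty$. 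Your proposal instead asserts that the hypothesis $\intpos x^2f(x)dx<\infty$ guarantees $\Esp\left[\supt\left(e^{-\eta t}I_\Delta(t)\right)\right]<\infty$. That first moment is not available: conditionally on $\Delta$, one has $\supt\left(e^{-\eta t}I_\Delta(t)\right)\leq\sum_{i\geq1}e^{-\eta\tilde T_i}J_i$ with $J_i$ i.i.d.\ copies of $\supt\left(e^{-\eta t}X(t)\right)$, and nothing in the paper gives $\Esp[J_1]<\infty$ --- Proposition \ref{propCMJ}(ii) only provides $\Esp[(\lop J_1)^2]<\infty$, and the natural bounds on such suprema (e.g.\ $\sup_{u>0}(N'_u/u)$ in Lemma \ref{suppoisson}, whose tail is exactly $\rho/a$) have infinite mean. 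So a route through integrability of the suprema cannot be completed under the paper's assumptions; the second-moment condition on $f$ controls $\Esp[\Delta]$, not any first moment of a supremum.

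What is actually needed --- and what constitutes the bulk of the paper's proof --- is the conditional estimate $\Esp[\lop\supt\left(e^{-\eta t}I_\Delta(t)\right)\mid\Delta]\leq\log2+C\Delta$, i.e.\ a bound \emph{linear} in $\Delta$, which is then integrated against the law of $\Delta$ (density $\theta^{-1}xf(x)$); this is precisely where $\intpos x^2f(x)dx<\infty$ enters, as $\Esp[\Delta]<\infty$. The paper proves this bound by noting that, conditionally on $\Delta$, the points $\varsigma_i=e^{-\eta\tilde T_i}J_i$ form (by Lemma \ref{ppp}) a Poisson point process with intensity $\frac{\Delta}{\eta}\frac{L(v)}{v}dv$, where $L$ is the tail of $J_1$, and then splitting: the points $\leq1$ contribute a sum of mean at most $\Delta/\eta$, while the $\lop$ of the sum of the points $>1$ is bounded by $\lop$ of their maximum plus $\lop$ of their number. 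The number is Poisson with parameter $\leq C\Delta$ (this uses $\Esp[\lop J_1]<\infty$), and $\Esp[\lop\max]\leq C'\Delta$ uses the full strength of the \emph{squared} log-moment $\Esp[(\lop J_1)^2]<\infty$ from Proposition \ref{propCMJ}(ii). Your sketch names the right ingredients (a Lemma-\ref{series}-type series argument, Proposition \ref{propCMJ}(ii), the second-moment hypothesis on $f$), but the step that connects them --- the linear-in-$\Delta$ conditional log-moment bound --- is missing, and the substitute you propose in its place is not correct as stated.
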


We also have a result about abundances of different types
\begin{thm}[Model III]\label{abondmodele3}
We have
$$\left(\frac{Z^1(t)}{I(t)},\frac{Z^2(t)}{I(t)},\dots\right)\underset{{t\rightarrow\infty}}\longrightarrow\left(\frac{\sigma_1}{\sigma},\frac{\sigma_2}{\sigma},\dots\right) \textrm{ a.s.}$$
where $(\sigma_i,i\geq1)$ are the points of a non-homogeneous Poisson point process on $(0,\infty)$ with intensity measure $\frac{\theta}{\eta}\frac{F(y)}{y}dy$ and $\sigma=\sum_{i\geq1}\sigma_i$.
\end{thm}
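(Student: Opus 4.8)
The plan is to obtain the result by dividing the family-wise limit of Proposition \ref{noufr} by the total-size limit of Proposition \ref{syco}, and then to recognise the limiting point configuration through the marking and mapping theorems for Poisson point processes. Concretely, I would write $Z^i(t)/I(t)=e^{-\eta t}Z^i(t)/(e^{-\eta t}I(t))$; by Proposition \ref{noufr} the numerator tends a.s. to $\sigma_i:=e^{-\eta T_i}G_i$ (a sequence naturally indexed by age, since $T_1<T_2<\cdots$), and by Proposition \ref{syco} the denominator tends a.s. to $\sigma=\sum_{j\geq1}e^{-\eta T_j}G_j$. Because each $G_j>0$ a.s. one has $\sigma>0$, while $\Esp[\sigma]<\infty$ from Proposition \ref{syco} gives $\sigma<\infty$ a.s.; hence $Z^i(t)/I(t)\to\sigma_i/\sigma$ a.s. for every fixed $i$, and intersecting the countably many almost-sure events produces the claimed componentwise convergence.

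It then remains to identify the law of $(\sigma_i)_{i\geq1}$, viewed as a point process on $(0,\infty)$ (forgetting the age ordering). By construction $(T_i,\Delta_i)$ is a Poisson point process on $\R_+\times\R_+$ with intensity $dt\otimes xf(x)dx$; since, conditionally on $\Delta_i$, the limit $G_i$ is Gamma$(\Delta_i/b,c)$-distributed and the marks are independent across $i$, the marking theorem shows that $(T_i,G_i)$ is a Poisson point process with intensity $dt\otimes\nu(dg)$, where $\nu(dg)=\left(\intpos xf(x)\frac{c^{x/b}g^{x/b-1}e^{-cg}}{\Gamma(x/b)}dx\right)dg$ is obtained by integrating out $x$. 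I would then apply the mapping theorem to $(t,g)\mapsto e^{-\eta t}g$: the change of variable $y=e^{-\eta t}g$ (so $dt=-dy/(\eta y)$, with $t\in(0,\infty)$ corresponding to $y\in(0,g)$) turns $\int_0^\infty dt\int_0^\infty\nu(dg)\,h(e^{-\eta t}g)$ into $\int_0^\infty h(y)\,\nu((y,\infty))/(\eta y)\,dy$ for any test function $h$, so that $(\sigma_i)$ is Poisson on $(0,\infty)$ with intensity $\nu((y,\infty))/(\eta y)\,dy$. Reading the definition of $F$ stated just before Proposition \ref{noufr} gives exactly $\nu((y,\infty))=\theta F(y)$, whence the announced intensity $\frac{\theta}{\eta}\frac{F(y)}{y}dy$.

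The only genuinely delicate point is the rigour of this mapping step: one must verify that $\frac{\theta}{\eta}\frac{F(y)}{y}dy$ is locally finite on $(0,\infty)$, which holds because $F$ is bounded and nonincreasing and $1/y$ is integrable away from $0$, so that only finitely many $\sigma_i$ exceed any level $\varepsilon>0$ and the image is a proper point process whose points accumulate only at the origin. As a consistency check one notes $\frac{\theta}{\eta}\intpos F(y)dy=\frac{\theta}{\eta}\Esp[G]=\Esp[\sigma]$, recovering the mean computed in Proposition \ref{syco}; the remaining arguments are routine applications of the two preceding propositions and of standard Poisson-process transformation rules.
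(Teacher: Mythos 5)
Your proposal is correct and follows essentially the same route as the paper: the paper likewise divides the limit of Proposition \ref{noufr} by that of Proposition \ref{syco}, and identifies the limiting configuration through its Lemma \ref{ppp}, whose proof is exactly your marking-plus-mapping computation (image of the Poisson process $(T_i,G_i)$ under $(t,g)\mapsto e^{-\eta t}g$, with $\nu((y,\infty))=\theta F(y)$). One small slip in wording: integrability of $F(y)/y$ near infinity does not follow from boundedness of $F$ and properties of $1/y$ alone, but from $\intpos F(y)\,dy=\Esp[G]<\infty$, which your own consistency check at the end supplies.
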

The proofs of these two results will be done in Subsection \ref{model3}.
Notice that in this model, the limit only depends on the lifespan measure via the Malthusian parameter $\eta$ and the constant $c$.

\section{Proof of Proposition \ref{propCMJ}}\label{preuvepropCMJ}
\subsection{Some useful, technical lemmas}
Thereafter, we state some lemmas that will be useful in subsequent proofs.
\begin{lem}\label{supmoyenne}
Let $Y_1,Y_2,\dots$ be a sequence of i.i.d random variables with finite expectation. Then, if $S:=\sup_{n\geq1}\left(\frac{1}{n}\sum_{i=1}^{n}Y_i\right)$,
$$\Esp\left[(\log^+S)^k\right]<\infty,\quad k\geq1.$$
\end{lem}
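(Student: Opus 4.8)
The plan is to pass from moments of $\lop S$ to the tail of $S$ via the layer-cake formula, and then to control that tail by a weak-type ($L^1$) maximal inequality. Writing $V:=\lop S\ge 0$, the identity $\Esp[V^k]=\intpos k\,x^{k-1}\p(V>x)\,dx$, together with the elementary equivalence $\{\lop S>x\}=\{S>\e^{x}\}$ valid for every $x>0$, reduces the statement to showing
\[
\intpos k\,x^{k-1}\,\p\!\left(S>\e^{x}\right)\,dx<\infty .
\]
The substitution $\lambda=\e^{x}$ rewrites this as $\int_1^\infty k(\log\lambda)^{k-1}\lambda^{-1}\p(S>\lambda)\,d\lambda<\infty$, so everything hinges on a good upper bound for $\p(S>\lambda)$ as $\lambda\to\infty$.

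The key step is the bound
\[
\p(S>\lambda)\le\frac{\Esp|Y_1|}{\lambda},\qquad \lambda>0 .
\]
This is where the real difficulty lies: since the $Y_i$ are only assumed integrable there is no concentration to exploit, and a naive union bound over all $n$ is hopeless, so one must control all the averages simultaneously using nothing more than an $L^1$ hypothesis. I would obtain the bound by observing that $(\tfrac1n\sum_{i=1}^n Y_i)_{n\ge1}$ is a \emph{reversed martingale}. Setting $S_n:=\sum_{i=1}^n Y_i$ and $\mathcal G_n:=\sigma(S_n,S_{n+1},\dots)$ (a \emph{decreasing} family), exchangeability of $Y_1,\dots,Y_n$ conditionally on $S_n$ gives $\Esp[Y_1\mid\mathcal G_n]=S_n/n$. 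Reading the finite string $S_N/N,\dots,S_1/1$ as a genuine martingale for the \emph{increasing} filtration $\mathcal G_N\subseteq\cdots\subseteq\mathcal G_1$, Doob's weak maximal inequality yields $\lambda\,\p\big(\max_{1\le n\le N}|S_n/n|\ge\lambda\big)\le\Esp|Y_1|$; letting $N\to\infty$ and using $S\le\sup_{n}|S_n/n|$ gives the displayed estimate. (Equivalently, one could invoke Hopf's maximal ergodic inequality applied to the shift on the i.i.d.\ sequence, which delivers the same $L^1$ maximal bound.)

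It then remains only to insert this estimate into the integral. With the same substitution $u=\log\lambda$ one gets
\[
\int_1^\infty k(\log\lambda)^{k-1}\lambda^{-1}\cdot\frac{\Esp|Y_1|}{\lambda}\,d\lambda=\Esp|Y_1|\intpos k\,u^{k-1}\e^{-u}\,du=\Esp|Y_1|\,\Gamma(k+1)<\infty ,
\]
and the range of $\lambda$ near $1$ is harmless (there $\p(S>\lambda)\le1$, and $(\log\lambda)^{k-1}\lambda^{-1}$ integrates to a finite quantity after the same change of variable). Hence $\Esp[(\lop S)^k]<\infty$ for every $k\ge1$. The only genuinely non-routine ingredient is the maximal inequality of the second step; the layer-cake reduction and the final Gamma computation are mechanical.
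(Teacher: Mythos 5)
Your proof is correct, and it shares the paper's overall architecture: a weak-type $L^1$ bound on the tail of $S$, followed by the layer-cake integration that produces a convergent Gamma-type integral. The difference is in how the key maximal bound is obtained. The paper does not prove it at all: it quotes an inequality from Kallenberg, $r\,\p(S\geq 2r)\leq\Esp[Y_1;\,Y_1\geq r]$, substitutes $r=e^s/2$ to get $\p(\lop S\geq s)\leq 2e^{-s}\Esp[Y_1]$, and integrates. You instead derive the (cruder, but entirely sufficient) bound $\lambda\,\p(S>\lambda)\leq\Esp|Y_1|$ from scratch: the identification $S_n/n=\Esp[Y_1\mid\sigma(S_n,S_{n+1},\dots)]$ exhibits the sample means as a reverse martingale, the finite string $S_N/N,\dots,S_1/1$ becomes a Doob martingale for the increasing filtration $\mathcal G_N\subseteq\cdots\subseteq\mathcal G_1$, and Doob's weak $(1,1)$ inequality applied to its absolute value (whose final term is $|Y_1|$) gives the bound, with the passage $N\to\infty$ justified by monotonicity of the events. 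All steps check out. What your route buys is self-containedness at the level of first-year martingale theory; what the paper's citation buys is brevity and a sharper truncated bound $\Esp[Y_1;\,Y_1\geq r]$ (useful for the strong law, irrelevant here, since the $\Esp|Y_1|/\lambda$ tail already makes the integral $\intpos k s^{k-1}e^{-s}\,ds$ finite). Your parenthetical remark is also apt: Hopf's maximal ergodic inequality is essentially the machinery behind the Kallenberg inequality the paper invokes, so the two proofs rest on the same underlying phenomenon reached by different standard tools.
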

\begin{proof}
According to Kallenberg \cite[p.184]{Kallenberg2002}, for $r>0$,
$$r\p(S\geq2r)\leq\Esp[Y_1;Y_1\geq r].$$
Hence, choosing $r=e^{s}/2$, we have for $s\geq0$,
$$\p(\lop S\geq s)\leq 2e^{-s}\Esp[Y_1]$$
and
$$\Esp[(\lop S)^k]=\intpos ks^{k-1}\p(\lop S\geq s)ds\leq2\Esp[Y_1]k\intpos s^{k-1}e^{-s}ds<\infty.$$
This completes the proof.
\end{proof}
\begin{lem}\label{suppoisson}
Let $A$ be a homogeneous Poisson process with parameter $\rho$. If $S:=\sup_{t>0}( A_t/t)$, then for $a>0$, $$\p(S>a)=\frac{\rho}{a}\vee1.$$ In particular, $$\forall k\geq1,\ \Esp\left[(\log^+S)^k\right]<\infty.$$
\end{lem}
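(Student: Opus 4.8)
The plan is to normalize the rate to $1$ by a time change, reinterpret $\{S>a\}$ as a first-passage event for an auxiliary spectrally negative L\'evy process, evaluate that probability through the two-sided exit identity and scale functions, and finally integrate the tail to get the logarithmic moments. First I would normalize: setting $B_u:=A_{u/\rho}$ defines a rate-one Poisson process, and since $A_t/t=\rho\,B_{\rho t}/(\rho t)$ one has $S=\rho\,S^{(1)}$ with $S^{(1)}:=\sup_{u>0}B_u/u$. It therefore suffices to prove $\p(S^{(1)}>b)=(1/b)\wedge1$ for a rate-one process, from which $\p(S>a)=(\rho/a)\wedge1$ follows. (Being a probability, the stated value must be read as the minimum $(\rho/a)\wedge1$; indeed $A_t/t\to\rho$ a.s.\ forces $S\ge\rho$, so $\p(S>a)=1$ whenever $a\le\rho$.)

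So fix $b>0$ and rename the rate-one process $A$, with arrival times $\tau_n=\sum_{i\le n}e_i$, the $e_i$ i.i.d.\ standard exponentials. Since $A$ increases only by unit jumps and $t\mapsto A_t/t$ decreases between jumps, the supremum is attained at jump times, $S^{(1)}=\sup_{n\ge1}n/\tau_n$, whence
$$\{S^{(1)}>b\}=\{\exists\,t>0:\ A_t>bt\}=\{\textstyle\inf_{t\ge0}\hat L_t<0\}=\{\hat\tau_0^-<\infty\},$$
where $\hat L_t:=bt-A_t$ is a spectrally negative L\'evy process started at $0$, $\hat\tau_0^-:=\inf\{t:\hat L_t<0\}$, and $\hat L$ has Laplace exponent $\kappa(\lambda)=b\lambda+e^{-\lambda}-1$ with mean $\kappa'(0^+)=b-1$. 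When $b\le1$ the process does not drift to $+\infty$ (it drifts to $-\infty$ if $b<1$, oscillates if $b=1$), so $\inf_t\hat L_t=-\infty$ a.s.\ and $\p(S^{(1)}>b)=1$; the content is the regime $b>1$.

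For $b>1$ I would invoke the two-sided exit formula for spectrally negative L\'evy processes \cite[ch.VII]{Levy_processes}: with $\hat\tau_y^+:=\inf\{t:\hat L_t>y\}$ and the scale function $W_b$ of $\hat L$ (defined as in (\ref{defW}) with $\kappa$ in place of $\psi$, and used exactly as in (\ref{doubleexit})), one has $\p_x(\hat\tau_y^+<\hat\tau_0^-)=W_b(x)/W_b(y)$ for $0\le x\le y$. Letting $y\to\infty$ and using $\hat L\to+\infty$ gives $\p_x(\hat\tau_0^-=\infty)=W_b(x)/W_b(\infty)$, where $W_b(\infty)=1/\kappa'(0^+)=1/(b-1)$ is read off from $\lambda/\kappa(\lambda)\to1/\kappa'(0^+)$ as $\lambda\to0^+$. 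Evaluating at the starting point $x=0$, and using that $\hat L$ has bounded variation with linear drift coefficient $b$ so that $W_b(0^+)=1/b$, yields $\p_0(\hat\tau_0^-=\infty)=(b-1)/b$, hence $\p(S^{(1)}>b)=\p_0(\hat\tau_0^-<\infty)=1/b$. Undoing the scaling gives $\p(S>a)=(\rho/a)\wedge1$. The logarithmic moments are then immediate: for $k\ge1$,
$$\Esp\big[(\lop S)^k\big]=\intpos k\,s^{k-1}\,\p(S>e^{s})\,ds=\intpos k\,s^{k-1}\big((\rho e^{-s})\wedge1\big)\,ds,$$
and the exponential tail beats the polynomial factor, so the integral is finite.

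The main obstacle is the exact first-passage probability for $b>1$, and within it the boundary behaviour at the starting point. Since $\hat L$ creeps upward away from $0$ (positive drift, only downward jumps), starting "at $0$" and "at $0^+$" give the same event, so the exit identity applies at $x=0$; the precise constant $1/b$ comes entirely from the value $W_b(0^+)=1/b$, which is what upgrades a mere tail bound of order $\rho/a$ into the exact equality. Everything else — the scaling reduction, the reduction of the supremum to the jump times, and the final moment estimate — is routine.
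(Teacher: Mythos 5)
Your proof is correct and takes essentially the same route as the paper: both recast $\{S>a\}$ as a first-passage event for the spectrally negative L\'evy process $at-A_t$ (your rate-one normalization is cosmetic), both apply the two-sided exit identity to get the probability as a ratio $W(0)/W(\infty)$ of boundary values of the scale function, both evaluate those values by the same Laplace-transform asymptotics (the paper phrases them as Tauberian theorems, you read them off from the drift coefficient and $\kappa'(0^+)$), and both finish by integrating the tail. Your observation that the stated $\frac{\rho}{a}\vee 1$ must be read as $\frac{\rho}{a}\wedge 1$ is also right, since that is exactly what the paper's own proof yields.
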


\begin{proof}
If $a<\rho$, since $\lim_{t\rightarrow\infty}A_t/t=\rho$, $\p(S>a)=1$.
Let now $a$ be a real number greater than $\rho$. Then
$$\p(S\leq a)=\p(\forall t\geq0, at-A_t\geq0).$$
According to Bertoin \cite[chap.VII]{Levy_processes}, since $(at-A_t,t>0)$ is a L\'evy process with no positive jumps and with Laplace exponent $\phi(\lambda)=\lambda a-\rho(1-e^{-\lambda})$, we have as in (\ref{doubleexit})
$$\p(S\leq a)=\frac{H(0)}{H(\infty)}$$ where $H$ is the scale function associated with $(at-A_t,t>0)$ .
We compute $H(0)$ and $H(\infty)$ using Tauberian theorems \cite[p.10]{Levy_processes}:
\begin{itemize}
\item $\phi(\lambda)\underset{0}\sim\lambda(a-\rho)$ then $H(x)\underset{\infty}\sim (a-\rho)^{-1}$
\item $\phi(\lambda)\underset{\infty}\sim\lambda a$ then $H(x)\underset{0}\sim a^{-1}$
\end{itemize}
Hence, $$\p(S\leq a)=\frac{a-\rho}{a}=1-\frac{\rho}{a}, \quad a>\rho.$$
Then, {\setlength\arraycolsep{2pt}
\begin{eqnarray}\Esp[(\lop S)^k]&=&\intpos kr^{k-1}\p(\lop S\geq r)dr\nonumber\\
&\leq&\int_0^{\lop \rho}kr^{k-1}dr+\int_{\lop \rho}^\infty kr^{k-1}\frac{\rho}{e^{r}}dr<\infty\nonumber
\end{eqnarray}}
and the proof is completed.
\end{proof}
\subsection{Proof of Proposition \ref{propCMJ}(i)}
A. Lambert proved in \cite{Amaury_cours_Mexique} that
$\p(\textrm{Ext})=1-\eta/b$
and in \cite{Amaury_contour_splitting_trees} that, conditional on $\textrm{Ext}^c$,
$$e^{-\eta t}X(t)\overset{\mathcal{L}}{\underset{t\rightarrow\infty}\longrightarrow} E$$ where $E$ is an exponential random variable with parameter $c$. To obtain a.s. convergence, we use \cite[Thm 5.4]{Nerman_supercritical_CMJ} where O. Nerman  gives sufficient conditions for convergence of CMJ processes to hold almost surely.
Here, the two conditions of his theorem are satisfied. Indeed, the second one holds if there exists on $[0,\infty)$ an integrable, bounded, non-increasing positive function $h$ such that
 $$\Esp\left[\supt\left(\frac{e^{-\eta t}\1_{\{t<\zeta\}}}{h(t)}\right)\right]<\infty$$
 where we recall that $\zeta$ is the lifespan duration of a typical individual in the CMJ-process $X$. Then, choosing $h(t)=e^{-\eta t}$, this condition is trivially satisfied.

 The first one holds if there exists a non-increasing Lebesgue integrable positive function $g$ such that
\begin{equation}\label{condition CV ps}\int_0^\infty\frac{1}{g(t)}e^{-\eta t}\mu(dt)<\infty.\end{equation}
Taking  $g(t)=e^{-\beta t}$ with $\eta>\beta>0$ and recalling that $\mu(dt)=\int_{(t,\infty)}\!\Lambda(dr)dt$, we have
{\setlength\arraycolsep{2pt}
\begin{eqnarray}
\int_0^\infty\frac{1}{g(t)}e^{-\eta t}\mu(dt)&=&\intpos e^{(\beta-\eta)t}\int_t^\infty\Lambda(dr)dt=\int_{(0,\infty)}\!\Lambda(dr) \int_0^re^{(\beta-\eta)t}dt\nonumber\\
&\leq&C\int_{(0,\infty)} \!\!\Lambda(dr)=Cb<\infty\nonumber
\end{eqnarray}}
and condition (\ref{condition CV ps}) is fulfilled.
\subsection{Proof of Proposition \ref{propCMJ}(ii)}
We will say that a process $G$ satisfies condition (C) if
$$\Esp\left[\left(\log^+\sup_{t\geq0}e^{-\eta t}G_t\right)^2\right]<\infty$$
and our aim is to prove that, conditional on non-extinction, the homogeneous CMJ-process $(X(t),t\geq0)$ satisfies it.

According to Theorem 4.4.1.1 in  \cite{Amaury_cours_Mexique}, conditional on non-extinction of $(X(t),t\geq0)$,\begin{equation*}\label{decomposition}X(t)=X_t^{\infty}+X_t^{d}+X_t^g\end{equation*}
where \begin{itemize}
\item $X_t^\infty$ is the number of individuals alive at time $t$ and whose descendance is infinite. In particular, $(X_t^\infty,t\geq0)$ is a Yule process with rate $\eta$.
\item $X_t^d$ is the number of individuals alive at time $t$ descending from trees grafted on the right hand side of the Yule tree (right refers to the order of the contour of the planar splitting tree)
    $$X_t^d:=\sum_{i=1}^{\tilde N_t}\tilde X_i(t-\tilde T_i)$$
where
\begin{itemize}
\item $(\tilde X_i,i\geq1)$ is a sequence of i.i.d. splitting trees conditional on extinction and independent of $X^\infty$. We know that such trees have the same distribution as subcritical splitting trees with lifespan measure $\tilde\Lambda(dr)=e^{-\eta r}\Lambda(dr)$ (cf. \cite{Amaury_cours_Mexique}).
\item Conditionally on $(X_t^\infty,t\geq0)$, $(\tilde N_t,t\geq0)$ is an non-homogeneous Poisson process with mean measure $(b-\eta)X_t^\infty dt$ and independent of $(\tilde X_i)_i$. We denote its arrival times by $\tilde T_1,\tilde T_2,\dots$
\end{itemize}
\item $X_t^g$ is the number of individuals alive at time $t$ descending from trees grafted on the left hand side of the Yule tree (left also refers to the contour order).

    More specifically, let $(A,R)$ be a couple of random variables with joint law given by
    \begin{equation}\label{loiAetR}\p(A+R\in dz,R\in dr)=e^{-\eta r}dr\Lambda(dz),\ 0<r<z\end{equation}
    and let $((A_{i,j},R_{i,j}),i\geq0,j\geq1)$ be i.i.d random variables distributed as $(A,R)$.
    We consider the arrival times  $$T_{i,j}=\tau_i+A_{i,1}+A_{i,2}+\cdots+A_{i,j},\ i\geq0,j\geq1$$ where $0=\tau_0<\tau_1<\tau_2<\cdots$ are the splitting times of the Yule tree, that is, on each new infinite branch, we start a new $A$-renewal process independent of the others.
    We define for $t\geq0$,
    $$X_t^g:=\sum_{i,j}\hat X_{i,j}(t- T_{i,j})\1_{\{T_{i,j}\geq t\}}.$$
where $(\hat X_{i,j},i\geq0,j\geq1)$ is a sequence of i.i.d. splitting trees independent of $X^\infty$, conditioned on extinction and such that the unique ancestor of $\hat X_{i,j}$ has lifetime $R_{i,j}$.
We denote by $\hat N_t:=\#\{(i,j),\  T_{i,j}\leq t\}$ the number of graft times before $t$.
\end{itemize}
\begin{figure}[!ht]
\begin{center}
\includegraphics{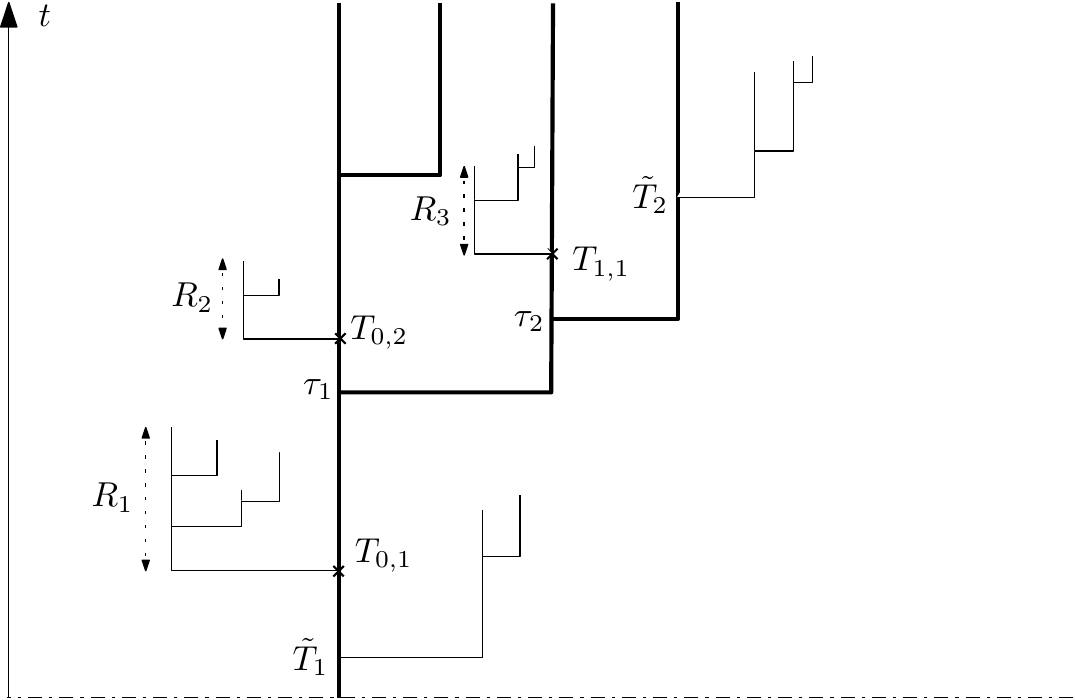}
\caption{Spine decomposition of a splitting tree. In bold, the Yule tree $X^\infty$ on which we graft on the left (at times $ T_{0,1},\dots$) the trees conditioned on extinction whose ancestors have lifetime durations distributed as $R$ and on the right (at time $\tilde T_1,\dots$) the trees conditioned on extinction.}
\label{figure}
\end{center}
\end{figure}

 In the following, we will prove that $(X_t,t\geq0)$ satisfies condition (C). To do this, using Minkowski inequality and the inequality
$$\forall x,y\geq0,\ \lop(x+y)\leq\lop x+\lop y+\log 2,$$  we only have to check that the three processes $X^\infty,X^g$ and $X^d$ satisfy (C).
\subsubsection{Proof of condition (C) for $X^\infty$}\quad\\
Since $(X_t^\infty,t\geq0)$ is a $\eta$-Yule process, $(e^{-\eta t}X_t^\infty,t\geq0)$ is a non-negative martingale \cite[p.111]{Athreya_Ney} and so by Doob's inequality \cite{Revuz1999},
\begin{equation}\label{kk}\Esp\left[\sup_{t\geq0}(e^{-\eta t}X^\infty_t)^2\right]\leq4\sup_{t\geq0}\Esp[(e^{-\eta t}X^\infty_t)^2]\end{equation}
Moreover, $\Esp[(X_t^\infty)^2]=2\left(e^{2\eta t}-e^{\eta t}\right)$ (again in \cite{Athreya_Ney}) and
$$\Esp[(e^{-\eta t}X^\infty_t)^2]=2e^{-2\eta t}\left(e^{2\eta t}-e^{\eta t}\right)\underset{t\rightarrow\infty}\longrightarrow2.$$
Hence, the supremum in the right hand side of (\ref{kk}) is finite and \begin{equation*}\label{ll}\Esp\left[\left(\sup_{t\geq0}\left(e^{-\eta t}X_t^\infty\right)\right)^2\right]<\infty.\end{equation*}
From now on, we will set $M:=\supt e^{-\eta t}X_t^\infty$. Since $\Esp[M^2]<\infty$, (C) is trivially satisfied  by $X^\infty$.

\subsubsection{Proof of condition (C) for $X^d$}\quad\\
We recall that $$X_t^d=\sum_{i=1}^{\tilde N_t}\tilde X_i(t-\tilde T_i).$$
Denote by $Y_i$ the total progeny of the conditioned splitting tree $\tilde X_i$, that is, the total number of descendants of the ancestor plus one. Then, a.s for all $t\geq0$ and $i\geq1$, we have $\tilde X_i(t-\tilde T_i)\leq Y_i$  and
\begin{equation*}\label{majpoptotale}X_t^d\leq\sum_{i=1}^{\tilde N_t}Y_i\textrm{ a.s.}\quad t\geq0.\end{equation*}
Hence, almost surely for all $t$,
$$e^{-\eta t}X_t^d\leq e^{-\eta t}\sum_{i=1}^{\tilde N_t}Y_i=\left(e^{-\eta t}\tilde N_t\right)\left(\frac{1}{\tilde N_t}\sum_{i=1}^{\tilde N_t}Y_i\right)$$
and, thanks to Minkowski's inequality,
{\setlength\arraycolsep{2pt}
\begin{eqnarray}
\Esp\left[\left(\log^+\sup_{t\geq0}\left(e^{-\eta t}X_t^d\right)\right)^2\right]^{1/2}&\leq&
\Esp\left[\left(\log^+\sup_{t\geq0}\left(e^{-\eta t}\tilde N_t\right)\right)^2\right]^{1/2}\nonumber\\
&&\qquad+\Esp\left[\left(\log^+\sup_{t>0}\left(\frac{1}{\tilde N_t}\sum_{i=1}^{\tilde N_t}Y_i\right)\right)^2\right]^{1/2}\label{pou}
\end{eqnarray}}
We first consider the second term in the right hand side of (\ref{pou}):
$$\Esp\left[\left(\log^+\sup_{t>0}\left(\frac{1}{\tilde N_t}\sum_{i=1}^{\tilde N_t}Y_i\right)\right)^2\right]
\leq \Esp\left[\left(\log^+\sup_{n\geq1}\left(\frac{1}{n}\sum_{i=1}^{n}Y_i\right)\right)^2\right]$$
since $(\tilde N_t,t\geq0)$ is integer-valued.
Thanks to Lemma \ref{supmoyenne}, this term is finite because $\Esp[Y_1]$ is finite. Indeed, $Y_1$ is the total progeny of a subcritical branching process and it is known \cite{Harris1963} that its mean is finite.\\

We are now interested in the first term in the r.h.s. of (\ref{pou}). We work conditionally on $X^\infty=:(f(t),t\geq0)$. Since we have $e^{-\eta t}\int_0^t f(s)ds\leq M$ using the supremum $M$ of $(e^{-\eta t}X_t^\infty,t\geq0)$,
$$e^{-\eta t}\tilde N_t=e^{-\eta t}\!\int_0^t\!f(s)ds\ \frac{\tilde N_t}{\int_0^tf(s)ds}\leq M\frac{\tilde N_t}{\int_0^tf(s)ds}$$
Moreover, $$\left(\tilde N_t,t\geq0\right)\overset{(d)}=\left(N'_{\int_0^tf(s)ds},t\geq0\right)$$ where $N'$ is a homogeneous Poisson process with parameter $b-\eta$.

Hence, using Minkowski's inequality,
{\setlength\arraycolsep{2pt}
\begin{eqnarray}
\Esp\left[\left(\lop \sup_{t\geq0}\left(e^{-\eta t}\tilde N_t\right)\right)^2\right]^{1/2}&\leq& \lop M+\Esp\left[\left(\lop \sup_{t>0}\left(\frac{N'_{\int_0^tf(s)ds}}{\int_0^tf(s)ds}\right)\right)^2\right]^{1/2}\nonumber\\
&=&\lop M+\Esp\left[\left(\lop \sup_{t>0}\left(\frac{N'_t}{t}\right)\right)^2\right]^{1/2}\nonumber
\end{eqnarray}}
 and the second term of the r.h.s. is finite using Lemma \ref{suppoisson}.

Hence, $(\tilde N_t,t\geq0)$ satisfies (C) since $\Esp[M^2]<\infty$ and $X^d$ as well, which ends this paragraph.

\subsubsection{Proof of condition (C) for $X^g$}\quad\\
We have $$X_t^g=\sum_{i=1}^{\hat N_t}\hat X_i(t-\hat T_i)$$
As in the previous section,
\begin{equation*}
e^{-\eta t}X_t^g\leq\left(e^{-\eta t}\hat N_t\right)\left(\frac{1}{\hat N_t}\sum_{i=1}^{\hat N_t}\hat Y_i\right) \textrm{ a.s.}
\end{equation*}
where $\hat Y_i$ is the total progeny of the conditioned CMJ-process $(\hat X_i(t),t\geq0)$.

Hence,
{\setlength\arraycolsep{2pt}
\begin{eqnarray}
\Esp\left[\left(\lop \supt\left(e^{-\eta t}X_t^g\right)\right)^2\right]^{1/2}&\leq&\Esp\left[\left(\lop \supt\left(e^{-\eta t}\hat N_t\right)\right)^2\right]^{1/2}\nonumber\\
&&\qquad+\Esp\left[\left(\lop \supn\left(\frac{1}{n}\sum_{i=1}^{n}\hat Y_i\right)\right)^2\right]^{1/2}\label{mr}
\end{eqnarray}}

We first prove that the second term in the r.h.s. is finite using Lemma \ref{supmoyenne}. We only have to check that $\Esp[\hat Y_1]$ is finite. We recall that $\hat Y_1$ is the total progeny of a splitting tree whose ancestor has random lifespan $R_1$ and conditioned on extinction. Conditioning on $R_1$, it is also the total progeny of a subcritical Bienaym\'e-Galton-Watson process starting from a Poisson random variable with mean $R_1$. Hence, $\Esp[\hat Y_1]$ is finite if $\Esp[R_1]$ is finite. As a consequence of (\ref{loiAetR}), we have that $\p(R_1\in dr)=e^{-\eta r}\int_r^{\infty}\Lambda(dz)dr$ and
{\setlength\arraycolsep{2pt}
\begin{eqnarray}
\Esp[R_1]&=&\int_{(0,\infty)} \Lambda(dz)\int_0^zre^{-\eta r}dr\nonumber\\
&=&-\int_{(0,\infty)} \Lambda(dz)\frac{ze^{-\eta z}}{\eta}+\int_{(0,\infty)} \Lambda(dz)\frac{1-e^{-\eta z}}{\eta^2}\nonumber\\
&=&\frac{\pse-1}{\eta}+\frac{1}{\eta}=\frac{\pse}{\eta}<\infty.\nonumber
\end{eqnarray}}

We are now interested in the first term of the r.h.s. of (\ref{mr}). We need to make calculations on $\hat N_t$ which is the total number of times of graftings $T_{i,j}$ less than or equal to $t$. Recall that for $i\geq0,j\geq1$, $T_{i,j}=\tau_i+\overline{A}_{i,j}$ where $\overline{A}_{i,j}:=A_{i,1}+\dots+A_{i,j}$ and that $\tau_i$ is the birth time of individual $i$ and that $\hat N_t$ is the sum of the numbers of graftings before $t$ on each of the $X_t^\infty$ branches.
 For $i\geq0$, denote by $\alpha_1^i,\alpha_2^i,\dots,$ the birth times of the daughters of individual $i$ and $\alpha^i_0=\tau_i$. For $k\geq1$, denote by $\tilde\tau_k^i:=\alpha^i_{k}-\alpha_{k-1}^i$ the interbirth times. In particular, $(\tilde\tau_k^i,i\geq0,k\geq1)$ are i.i.d. exponential r.v. with parameter $\eta$ since we consider a $\eta$-Yule tree.
\begin{figure}[!ht]
\begin{center}
\includegraphics{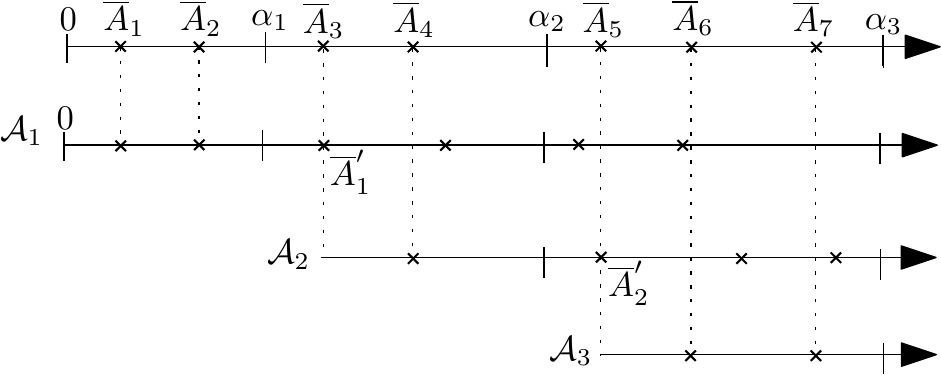}
\caption{Construction of the renewal process $(\overline{A}_j,j\geq1)$ by concatenation of the renewal processes $\mathcal A_k$.}
\label{concatenation}
\end{center}
\end{figure}

We enlarge the probability space by redefining the renewal processes $(\overline{A}_{i,j},j\geq1)$ from a doubly indexed sequence of i.i.d. $A$-renewal processes $(\mathcal{A}_{i,k},i\geq0,k\geq1)$.
    We define the process $(\overline{A}_{i,j},j\geq1)$ recursively by concatenation  of the $\mathcal A_{i,k}$'s as in Figure \ref{concatenation}. To simplify notation, we only define $(\overline{A}_{0,j},j\geq1)$ which will be denoted by $(\overline{A}_{j},j\geq1)$.
 Then, $$\overline{A}'_1:=\inf\{t>\alpha_1|\mathcal A_1\cap[t,+\infty)\neq\emptyset\},\quad C_1:=\#\mathcal A_1\cap[0,\alpha_1]+1,$$
and
$$\overline{A}_j:=\inf\{t\geq0|\#\mathcal A_1\cap[0,t]=j\},\quad j=1,\dots,C_1+1.$$
Moreover, for $l\geq1$, if one knows $C_l$ and $\overline{A}'_l$, let $r_l$ be the unique integer such that $\overline{A}'_l$ belongs to $]\alpha_{r_l},\alpha_{r_l+1}]$ and define
$$\overline{A}'_{l+1}:=\overline{A}'_{l}+\inf\{t>\alpha_{r_l+1}-\overline{A}'_l|\mathcal A_{r_l+1}\cap[t,+\infty)\neq\emptyset\},$$
$$C_{l+1}:=\#\mathcal A_{r_l+1}\cap[0,\alpha_{r_l+1}-\overline{A}'_l]+1,$$
and
$$\overline{A}_{C_1+\dots+C_l+j}:=\overline A'_l+\inf\{t\geq0|\#\mathcal A_{r_l+1}\cap[0,t]=j\},\quad j=1,\dots,C_{l+1}.$$
Then, $(\overline{A}_j,j\geq1)$ is a $A$-renewal process because we concatenated the independent renewal processes $(\mathcal A_k)$ stopped at the first renewal time after a given time. Indeed, one can see a renewal process as the range of a compound Poisson process whose jumps are distributed as $A_1$. The renewal process stopped at the first point after $t$ is then the range of a compound Poisson process stopped at the first hitting time $T$ of $[t,\infty)$, which is a stopping time. Then $(\overline{A}_j,j\geq1)$, as the range of the concatenation of independent compound Poisson processes killed at stopping times, is a compound Poisson process, by the strong Markov property. In conclusion, $(\overline{A}_j,j\geq1)$ is a $A$-renewal process.

According to previous computations, we have
$$\hat N_t\leq\sum_{i\geq0,l\geq1}C_l^i\1_{\{\alpha_{r_{l,i}}^i\leq t\}}$$ where $C_l^i$ and $r_{l,i}^i$ are the analogous notations as $C_l$ and $r_{l}$ for individual $i$.
Then if
$$D_k^i:=\#\mathcal{A}_{i,k}\cap[0,\tilde\tau_k^i]+1\qquad k\geq1,i\geq0,$$
since $\alpha_{r_{l,i}+1}^i-\overline A'_{l,i}\leq \alpha_{r_{l,i}+1}^i-\alpha_{r_{l,i}}^i=\tilde\tau_{r_{l,i}}^i$, then $C_l^i\leq D_{r_{l,i}}^i$ and
\begin{equation}\label{elole}\hat N_t\leq\sum_{i\geq0,l\geq1}D_{r_{l,i}}^i\1_{\{\alpha_{r_{l,i}}^i\leq t\}}\leq\sum_{i\geq0,k\geq1}D_k^i\1_{\{\alpha_k^i\leq t\}}\textrm{ a.s.}\end{equation}

Moreover, the random variables $(D^i_k,i\geq0,k\geq 1)$ are independent and identically distributed as 1 plus the value of a $A$-renewal process at an independent exponential time $E$ with parameter $\eta$
$$D:=\sup\{j\geq1,\ A_1+\cdots+A_j\leq E\}+1.$$
The sum of the r.h.s. of (\ref{elole}) has $2X^\infty_t-1$ terms. Indeed, each individual of the Yule tree contributes to $1+n_i$ terms in the sum where $n_i$ is the number of daughters of individual $i$ born before $t$. Then, there is $\displaystyle X_t^\infty+\sum_{i=0}^{X^\infty_t-1}n_i$ terms in the sum and $\sum_i n_i$ is the number of descendants of the individual 0 born before $t$ which equals $X^\infty_t-1$.

Hence, using Minkowski's inequality,
{\setlength\arraycolsep{2pt}\begin{eqnarray}\label{fr}
\Esp\left[\left(\lop \supt\left(e^{-\eta t}\hat N_t\right)\right)^2\right]^{1/2}&\leq&\Esp\left[\left(\lop \supt\left(e^{-\eta t}(2X_t^\infty-1)\right)\right)^2\right]^{1/2}\nonumber\\
&&\qquad\quad+\Esp\left[\left(\lop \supn\left(\frac{1}{n}\sum_{i=1}^n D_i\right)\right)^2\right]^{1/2}
\end{eqnarray}}
where $(D_i,i\geq1)$ are i.i.d. r.v. distributed as $D$.
The first term of (\ref{fr}) is smaller than $\Esp[(\lop(2M))^2]^{1/2}$ and the second term is finite by another use of Lemma \ref{supmoyenne} if $\Esp[ D_1]<\infty$. Denote by $(C_t,t\geq0)$ the renewal process whose arrival times are distributed as $A$. A little calculation from (\ref{loiAetR}) gives us that $\Esp[A]=\frac{m-1}{\eta}>0$. Thus, according to Theorem 2.3 of Chapter 5 in \cite{Durrett1999}, we have $$\lim_{t\rightarrow\infty}\frac{\Esp[C_t]}{t}=\frac{\eta}{m-1}$$ and so there exists $\kappa>0$ such that $\Esp[C_t]\leq\kappa t$ for $t\geq0$. Then,
$$\Esp[ D_1]=1+\Esp[C_E]=1+\intpos\eta e^{-\eta t}dt\Esp[C_t]\leq 1+\kappa \intpos\eta e^{-\eta t}dt=1+\frac{\kappa}{\eta}<\infty.$$
Finally, the r.h.s. of (\ref{fr}) and (\ref{mr}) are finite and $X^g$ satisfies condition (C).

\section{Proof of Theorem \ref{convpresquesureimmig}\label{preuvetheoprincipal}}
\subsection{Some preliminary lemmas}
We start with some properties about $W$ the scale function associated with $\psi$ and defined by (\ref{defW}).
\begin{lem}\label{props_de_W}
\begin{enumerate}[(i)]
\item $W(0)=1$,
\item $e^{-\eta t}W(t)\longrightarrow c^{-1}$ as $t\rightarrow\infty$,
\item $W$ is differentiable and $W\star\Lambda=bW-W'$ where $\star$ is convolution product.
\end{enumerate}
\end{lem}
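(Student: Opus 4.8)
The plan is to exploit the renewal structure hidden in the defining relation (\ref{defW}). Starting from $\psi(\lambda)=\lambda-\int_{(0,\infty)}(1-e^{-\lambda r})\Lambda(dr)$ and recalling $\mu(dx)=\int_{(x,\infty)}\Lambda(dr)\,dx$, a direct computation of $\int_0^\infty e^{-\lambda x}\mu(dx)$ gives $\psi(\lambda)/\lambda=1-\hat\mu(\lambda)$, where $\hat\mu(\lambda):=\int_0^\infty e^{-\lambda x}\mu(dx)<1$ for $\lambda>\eta$. Hence
$$\widehat W(\lambda)=\frac{1}{\psi(\lambda)}=\frac{1}{\lambda}\sum_{n\geq0}\hat\mu(\lambda)^n,\qquad\lambda>\eta,$$
and inverting term by term identifies $W(x)=\sum_{n\geq0}\mu^{*n}([0,x])=U([0,x])$, the renewal measure of $\mu$ evaluated on $[0,x]$. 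For (i), I would then invoke the initial value theorem: since $\Lambda$ has finite mass $b$, one has $\lambda^{-1}\int_{(0,\infty)}(1-e^{-\lambda r})\Lambda(dr)\to0$, so $\psi(\lambda)\sim\lambda$ as $\lambda\to\infty$ and $\lambda\widehat W(\lambda)=\lambda/\psi(\lambda)\to1$; as $W$ is right-continuous, $W(0^+)=1$. Equivalently, the only atom of $U$ is $\mu^{*0}=\delta_0$, of mass $1$.

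For (ii) the key idea is exponential tilting. Put $\tilde\mu(dx):=e^{-\eta x}\mu(dx)$; by the defining property of the Malthusian parameter $\int_0^\infty e^{-\eta x}\mu(dx)=1$, so $\tilde\mu$ is a probability measure, and it is spread out because $\mu$ has a density. Differentiating $\hat\mu(\lambda)=1-\psi(\lambda)/\lambda$ at $\lambda=\eta$ and using $\psi(\eta)=0$ yields the mean $\tilde m=\int_0^\infty x\,\tilde\mu(dx)=\psi'(\eta)/\eta=c/\eta$. Tilting the renewal representation gives
$$e^{-\eta t}W(t)=\int_0^t e^{-\eta(t-s)}\,\tilde U(ds),$$
where $\tilde U=\sum_{n\geq0}\tilde\mu^{*n}$ is the renewal measure of a proper, non-lattice renewal process. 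Since $u\mapsto e^{-\eta u}$ is directly Riemann integrable, the key renewal theorem delivers $e^{-\eta t}W(t)\to\tilde m^{-1}\int_0^\infty e^{-\eta u}\,du=(\eta/c)\cdot\eta^{-1}=c^{-1}$.

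For (iii) I would argue by Laplace inversion. Writing $\psi(\lambda)=\lambda-b+\int_{(0,\infty)}e^{-\lambda r}\Lambda(dr)$ and expanding the identity $\widehat W(\lambda)\psi(\lambda)=1$ gives
$$\lambda\widehat W(\lambda)-b\widehat W(\lambda)+\widehat W(\lambda)\widehat\Lambda(\lambda)=1.$$
The renewal representation shows $W$ is absolutely continuous with a density (the atom $\delta_0$ produces $W(0)=1$ while $\sum_{n\geq1}\mu^{*n}$ is absolutely continuous), so $\lambda\widehat W(\lambda)=1+\widehat{W'}(\lambda)$ using part (i); moreover $\widehat W(\lambda)\widehat\Lambda(\lambda)=\widehat{W\star\Lambda}(\lambda)$. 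Substituting and cancelling the constant $1$ leaves $\widehat{W'}+\widehat{W\star\Lambda}-b\widehat W=0$, and injectivity of the Laplace transform yields $W\star\Lambda=bW-W'$.

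The main obstacle is the convergence in (ii): a naive Abelian/Tauberian argument fails because $e^{-\eta t}W(t)$ need not be monotone, so one genuinely needs the sharp (Blackwell-type) renewal theorem, which is available precisely because $\mu$—hence the tilted law $\tilde\mu$—is absolutely continuous and therefore spread out. Once the tilting reduces the statement to a proper renewal process the limit is immediate, and parts (i) and (iii) are then essentially bookkeeping with Laplace transforms.
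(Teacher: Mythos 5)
Your proof is correct, but it takes a genuinely different route from the paper's on the key point, part (ii). For (i) and (iii) the two arguments essentially coincide: the paper also gets $W(0)=1$ from $\psi(\lambda)\sim\lambda$ at infinity plus a Tauberian theorem, and proves (iii) by exactly your Laplace-transform bookkeeping ($\widehat{W\star\Lambda}=(\psi(\lambda)-\lambda+b)/\psi(\lambda)$ and $\widehat{W'}=\lambda/\psi(\lambda)-1$, then injectivity); your renewal representation $W(x)=\sum_{n\geq0}\mu^{*n}([0,x])$ adds a cleaner justification of $W(0)=1$ (mass of the only atom $\delta_0$) and of the absolute continuity implicitly used when writing $W'$. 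The real divergence is (ii): the paper expands $\psi(\lambda+\eta)\sim\lambda\psi'(\eta)=\lambda c$ and invokes ``another Tauberian theorem,'' importing the result from \cite{Amaury_contour_splitting_trees}, whereas you tilt the renewal measure by $e^{-\eta x}$, check that $\tilde\mu(dx)=e^{-\eta x}\mu(dx)$ is a proper, spread-out probability with mean $c/\eta$ (a correct computation, via $\hat\mu(\lambda)=1-\psi(\lambda)/\lambda$ differentiated at $\eta$), and apply the key renewal theorem to $e^{-\eta t}W(t)=\int_{[0,t]}e^{-\eta(t-s)}\tilde U(ds)$ with the directly Riemann integrable function $u\mapsto e^{-\eta u}$. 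Your closing caveat is well taken: since $e^{-\eta t}W(t)$ need not be monotone, a naive Karamata argument only yields $\int_0^t e^{-\eta s}W(s)\,ds\sim t/c$, so the paper's one-line Tauberian step really rests on the cited reference, while your argument is self-contained. What the paper's route buys is brevity; what yours buys is a proof from first principles, at the small cost of justifying the renewal representation itself (local finiteness of $\sum_n\mu^{*n}$ on compacts, e.g. via $\mu^{*n}([0,x])\leq e^{\lambda x}\hat\mu(\lambda)^n$ for $\lambda>\eta$, and term-by-term Laplace inversion), a step you assert but do not detail.
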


\begin{proof}
\begin{enumerate}[(i)]
\item We have $$\intpos e^{-\lambda t} W(t)dt=\frac{1}{\psi(\lambda)}\underset{\lambda\rightarrow\infty}\sim\frac{1}{\lambda}$$
  because $\psi(\lambda)=\lambda-b+\int_{(0,\infty)} e^{-\lambda r}\Lambda(dr)$.
Then, by a Tauberian theorem \cite[p10]{Levy_processes}, $\lim_{t\rightarrow0}W(t)=1$.
\item (From \cite{Amaury_contour_splitting_trees}) For $\lambda>0$, using a Taylor expansion and $\psi(\eta)=0$, $$\psi(\lambda+\eta)\underset{\lambda\rightarrow0}\sim\lambda\psi'(\eta)=\lambda c.$$
Then $$\intpos W(t)e^{-\eta t}e^{-\lambda t}dt\underset{\lambda\rightarrow0}\sim\frac{1}{\lambda c}$$ and another Tauberian theorem entails that $W(t)e^{-\eta t}$ converges to $1/c$ as $t\rightarrow\infty$.
\item We first compute the Laplace transform of $W\star\Lambda$. Let $\lambda>\eta$
{\setlength\arraycolsep{2pt}
\begin{eqnarray}
\intpos e^{-\lambda t}W\star\Lambda(t)dt&=&\intpos e^{-\lambda t}W(t)dt\int_{(0,\infty)} e^{-\lambda r}\Lambda(dr)\nonumber\\
&=&\frac{1}{\psi(\lambda)}(\psi(\lambda)-\lambda+b)\nonumber
\end{eqnarray}}
Integrating by parts and using (i) and (ii),
$$\intpos e^{-\lambda t}W'(t)dt=\left[ e^{-\lambda t}W(t)\right]_0^\infty+\lambda\intpos e^{-\lambda t}W(t)dt=-1+\frac{\lambda}{\psi(\lambda)}$$
and so the Laplace transform of $bW-W'$ is $b/\psi(\lambda)+1-\lambda/\psi(\lambda)$ which equals that of $W\star \Lambda$. This completes the proof.
\end{enumerate}
\end{proof}

The following lemma deals with the convergence of random series:
\begin{lem}\label{series}
Let $(\zeta_i,i\geq1)$ be a sequence of i.i.d. positive random variables such that $\Esp[\lop \zeta_1 ]$ is finite and let $(\tau_i,i\geq1)$ be the arrival times of a Poisson point process with parameter $\rho$ independent of $(\zeta_i,i\geq1)$.
Then for any $r>0$, the series $\sum_{i\geq1}e^{-r \tau_i}\zeta_i$ converges a.s.
\end{lem}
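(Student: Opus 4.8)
\emph{Proof strategy.} The plan is to apply the Cauchy root test to the series pathwise, which reduces the whole problem to estimating the $i$-th root of the general term. Writing $a_i:=e^{-r\tau_i}\zeta_i>0$, we have
$$a_i^{1/i}=\exp\left(-r\,\frac{\tau_i}{i}+\frac{\log\zeta_i}{i}\right),$$
so it suffices to show that almost surely $\limsup_{i\to\infty}a_i^{1/i}<1$; for each such sample path the deterministic root test then yields convergence of $\sum_{i\geq1}a_i$, and hence almost sure convergence of the random series. I would control the two terms in the exponent separately.

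For the first term, since $(\tau_i,i\geq1)$ are the arrival times of a $\rho$-Poisson process we may write $\tau_i=\sum_{k=1}^i\xi_k$ with $(\xi_k)$ i.i.d. exponential of mean $1/\rho$, and the strong law of large numbers gives $\tau_i/i\to 1/\rho$ a.s. The delicate step, and where the hypothesis $\Esp[\lop\zeta_1]<\infty$ is used, is to prove that $\limsup_{i\to\infty}(\log\zeta_i)/i\leq 0$ a.s. Because $\log\zeta_i\leq\lop\zeta_i$, it is enough to show $\lop\zeta_i/i\to0$ a.s.

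To do this I would invoke the standard equivalence: for a nonnegative random variable $Y$, $\Esp[Y]<\infty$ if and only if $\sum_{i\geq1}\p(Y>\eps i)<\infty$ for every $\eps>0$. Applying it to $Y=\lop\zeta_1$ and using that the $\zeta_i$ are identically distributed, we get $\sum_{i\geq1}\p(\lop\zeta_i>\eps i)<\infty$ for each fixed $\eps>0$. The Borel--Cantelli lemma then gives that, almost surely, $\lop\zeta_i\leq\eps i$ for all large $i$, so $\limsup_i\lop\zeta_i/i\leq\eps$. Intersecting the probability-one events obtained for $\eps=1/m$, $m\in\Nat$, and letting $\eps\downarrow0$ yields $\lop\zeta_i/i\to0$ a.s.

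Combining the two pieces on the (probability-one) intersection of the two limit events, we obtain almost surely
$$\limsup_{i\to\infty}a_i^{1/i}\leq\exp\left(-\frac{r}{\rho}\right)<1,$$
and the root test finishes the argument. The only genuinely non-routine point is the passage from the log-moment hypothesis to the almost sure sublinear growth of $\log\zeta_i$; note that the independence of $(\zeta_i)$ and $(\tau_i)$ is not actually needed here, since each of the two controlling statements holds on its own probability-one event and we only use their intersection.
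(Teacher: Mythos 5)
Your proof is correct and is essentially the paper's own argument: the paper likewise writes the general term as $\exp\left(-i\left[r\tau_i/i-\lop\zeta_i/i\right]\right)$, applies the strong law of large numbers to $\tau_i/i$, and uses the Borel--Cantelli fact that $\Esp[\lop\zeta_1]<\infty$ forces $\lop\zeta_i/i\to0$ a.s., concluding geometric decay of the terms. Your root-test packaging of the final comparison is only a cosmetic difference, and your side remark that independence of $(\zeta_i)$ and $(\tau_i)$ is not needed is accurate for both versions.
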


\begin{proof} We have
\begin{equation}\label{expo}\sum_{i\geq1}e^{-r \tau_i} \zeta_i\leq\sum_{i\geq1}\exp\left(-i\left[r\frac{\tau_i}{i}-\frac{\lop \zeta_i}{i}\right]\right)\end{equation}
We use the following consequence of Borel-Cantelli's lemma: if $\xi_1,\xi_2,\dots$ are i.i.d. non-negative random variables, $$\limsup_{n\rightarrow \infty}\frac{\xi_n}{n}=0 \textrm{ or }\infty \textrm{ a.s.}$$
according to whether $\Esp[\xi_1]$ is finite or not. We use it with $\xi_i=\log^+ \zeta_i$. Hence, since $\Esp[\lop \zeta_1]$ is finite, $\lim_{i\rightarrow\infty}\lop\zeta_i/i=0$ a.s. Moreover, by the strong law of large numbers, $\tau_i/i$ converges almost surely to $1/\rho$ as $i$ goes to infinity. Then,
$$r\frac{\tau_i}{i}-\frac{\lop \zeta_i}{i}\underset{i\rightarrow\infty}\longrightarrow \frac{r}{\rho}>0 \textrm{ a.s.}$$
So the series in (\ref{expo}) converges a.s.
\end{proof}

\subsection{Proof of Theorem \ref{convpresquesureimmig}(i)}
In order to find the law of $I(t)$ the total population at time $t$, we use the fact that it is the sum of a Poissonian number of population sizes. More specifically, if we denote by $N_t$ the number of populations at time $t$, $(N_t,t\geq0)$ is a Poisson process with parameter $\theta$ and conditionally on $\{N_t=k\}$, the $k$-tuple $(T_1,\ldots,T_k)$ has the same distribution as $\left(U_{(1)},\ldots,U_{(k)}\right)$ which is the reordered $k$-tuple of $k$ independent uniform random variables on $[0,t]$.
Hence, conditionally on $\{N_t=k\}$,
{\setlength\arraycolsep{2pt}
\begin{eqnarray}I(t)&\overset{(d)}=&\sum_{i=1}^k X_i\left(t-U_{(i)}\right)\overset{(d)}=\sum_{i=1}^k X_i(t-U_{i})\overset{(d)}=\sum_{i=1}^k X_i(U_{i})\nonumber
\end{eqnarray}}
since all $U_{(i)}$'s appear in the sum and the $U_i$'s are independent from the $X_i$'s.
Hence, conditionally on $\{N_t=k\}$, $I(t)$ has the same distribution as a sum of $k$ i.i.d. r.v. with law $X(U)$.
Then, {\setlength\arraycolsep{2pt}
\begin{eqnarray}
G_t(s)&=&\sum_{k\geq0}\Esp\left[\left.s^{I(t)}\right|N_t=k\right]\p(N_t=k)\nonumber\\
&=&\sum_{k\geq0}\Esp\left[s^{X_1(U_1)}\right]^k\frac{(t\theta)^k}{k!}e^{-\theta t}\nonumber\\
&=&e^{-\theta t}\exp\left(t\theta\Esp\left[s^{X_1(U_1)}\right]\label{equa1}\right)
\end{eqnarray}}

We now compute the law of $X(t)$ for $t>0$ and then we will compute the law of $X_1(U_1)$.
Using (\ref{prop1loideX(t)}), (\ref{prop2loideX(t)}) and Lemma \ref{props_de_W}(iii), we have
{\setlength\arraycolsep{2pt}
\begin{eqnarray}\p(X(t)=0)&=&\int_{(0,\infty)} \tilde\p_r(X(t)=0)\frac{\Lambda(dr)}{b}=\int_{(0,\infty)}\frac{W(t-x)}{W(t)}\frac{\Lambda(dr)}{b}\nonumber\\
&=&\frac{1}{bW(t)}W\star\Lambda(t)=1-\frac{W'(t)}{bW(t)}\nonumber
\end{eqnarray}}
and for $n\in\Nat^*$,
{\setlength\arraycolsep{2pt}
\begin{eqnarray}
\p(X(t)=n)&=&\int_{(0,\infty)} \tilde\p_r(X(t)=n)\frac{\Lambda(dr)}{b}\nonumber\\
&=&\frac{1}{bW(t)}\left(1-\frac{1}{W(t)}\right)^{n-1}\Big(b-W(t)^{-1}W\star\Lambda(t)\Big)\nonumber\\
&=&\left(1-\frac{1}{W(t)}\right)^{n-1}\frac{W'(t)}{bW(t)^2}.\nonumber
\end{eqnarray}}
We now compute $\p(X_1(U_1)=n)$
{\setlength\arraycolsep{2pt}
\begin{eqnarray}
\p(X_1(U_1)=0)&=&\frac{1}{t}\int_0^t\p(X_1(u)=0)du=1-\frac{1}{tb}\int_0^t\frac{W'(u)}{W(u)}du\nonumber\\
&=&1-\frac{\log W(t)}{tb}\nonumber
\end{eqnarray}}
because $W(0)=1$. For $n>0$,
{\setlength\arraycolsep{2pt}
\begin{eqnarray}
\p(X_1(U_1)=n)&=&\frac{1}{t}\int_0^t\p(X_1(u)=n)du=\frac{1}{t}\int_0^t\left(1-\frac{1}{W(u)}\right)^{n-1}\frac{W'(u)}{bW(u)^2}du\nonumber\\
&=&\frac{1}{bt}\int_{W(t)^{-1}}^1\frac{(1-u)^{n-1}}{n}du=\frac{\left(1-1/W(t)\right)^n}{btn}.\nonumber
\end{eqnarray}}

We are now able to compute the generating function of $X_1(U_1)$. For $s,t>0$,
{\setlength\arraycolsep{2pt}
\begin{eqnarray}
\Esp\left[s^{X_1(U_1)}\right]&=&\frac{1}{bt}\sum_{n\geq1}\frac{s^n}{n}\left(1-\frac{1}{W(t)}\right)^n+1-\frac{\log W(t)}{tb}\nonumber\\
&=&1-\frac{1}{bt}\left[\log\Big(1-s(1-1/W(t))\Big)+\log W(t)\right]\nonumber\\
&=&1-\frac{1}{bt}\log\Big(W(t)+s(1-W(t))\Big).\nonumber
\end{eqnarray}}
Finally for $t,s>0$, according to (\ref{equa1}),
$$G_t(s)=e^{-\theta t}\exp\left(t\theta\Esp\left[s^{X_1(U_1)}\right]\right)=\left(W(t)+s(1-W(t))\right)^{-\theta/b}.$$
which is the p.g.f. of a negative binomial distribution with parameters $1-W(t)^{-1}$ and $\theta/b$.

\subsection{Proof of Theorem \ref{convpresquesureimmig}(ii)}
We first prove the almost sure convergence. Splitting $I(t)$ between the surviving and the non-surviving populations, we have
\begin{equation}\label{splitting}e^{-\eta t}I(t)=\sum_{i\geq1}e^{-\eta t}Z^{(i)}(t)+\sum_{i\geq1}e^{-\eta t}X_{i}(t-T_{i})\1_{\{t\geq T_{i}\}\cap\ext_i}\end{equation}
where for $i\geq1$, $\ext_i$ denotes the extinction of the process $X_i$.
We will show that for each of these two terms, we can exchange summation and limit, so that in particular, the second term vanishes as $t\rightarrow\infty$.

We first treat the second term of the r.h.s. of (\ref{splitting}). We have
$$C_t:=\sum_{i\geq1}X_{i}(t-T_{i})\1_{\{t\geq T_{i}\}\cap\ext_i}\leq\sum_{i\geq1}\1_{\{t\geq T_i\}}Y_i\1_{\ext_i} \textrm{ a.s.}$$
where $Y_i$ is the total progeny of the $i$-th population which does not survive.
Moreover, $\Esp\left[Y_1\1_{\ext_1}\right]\leq\Esp[Y_1]$, $Y_1$ is the total progeny of a subcritical Bienaym\'e-Galton-Watson process so its mean is finite. Hence, since the r.h.s. in the previous equation is a compound Poisson process with finite mean, it grows linearly and $e^{-\eta t}C_t$ vanishes as $t\rightarrow\infty$.

 To exchange summation and limit in the first term of the r.h.s. of (\ref{splitting}), we will use the dominated convergence theorem. By Proposition \ref{conv pop}, we already know that $e^{-\eta t}Z^{(i)}(t)$ a.s. converges as $t$ goes to infinity to $e^{-\eta T^{(i)}}E_i$. Hence, it is sufficient to prove that
\begin{equation}\label{mmm}\sum_{i\geq1}\sup_{t\geq 0}\left(e^{-\eta t}Z^{(i)}(t)\right)<\infty \textrm{ a.s.}\end{equation}
Since $$\sup_{t\geq 0}\left(e^{-\eta t}Z^{(i)}(t)\right)=e^{-\eta T^{(i)}}\sup_{t\geq0}\left(e^{-\eta t}X_{(i)}(t)\right),$$
we have
{\setlength\arraycolsep{2pt}
\begin{eqnarray}
\sum_{i\geq1}\sup_{t\geq 0}\left(e^{-\eta t}Z^{(i)}(t)\right)&=&\sum_{i\geq1}e^{-\eta T^{(i)}}J_i\nonumber
\end{eqnarray}}
where $J_i:=\sup_{t\geq0}\left(e^{-\eta t}X_{(i)}(t)\right)$ for $i\geq1$ and $J_1,J_2,\dots$ are i.i.d.
Thus, using Lemmas \ref{series}, this series a.s. converges
if $\Esp[\lop J_1]$ is finite, which is checked thanks to Proposition \ref{propCMJ}(ii).

Then we get (\ref{mmm}) and using the dominated convergence theorem,
$$I:=\lim_{t\rightarrow\infty}e^{-\eta t}I(t)=\sum_{i\geq1}e^{-\eta T^{(i)}}E_i  \textrm{ a.s.}$$

In order to find the law of $I$, we compute its Laplace transform. For $a>0$, using part(i) of this theorem, $$\Esp\left[e^{-ae^{-\eta t}I(t)}\right]=G_t(e^{-ae^{-\eta t}})=\left(e^{-ae^{-\eta t}}+\left(1-e^{-ae^{-\eta t}}\right)W(t)\right)^{-\theta/b}$$
and
$$e^{-ae^{-\eta t}}+\left(1-e^{-ae^{-\eta t}}\right)W(t)\underset{t\rightarrow\infty}\sim 1+ae^{-\eta t}W(t)\underset{t\rightarrow\infty}\longrightarrow 1+\frac{a}{c}$$ using Lemma \ref{props_de_W} (ii).
Then,
$$\Esp\left[e^{-aI}\right]=\left(\frac{c}{a+c}\right)^{\theta/b}$$
which is the Laplace transform of a $\textrm{Gamma}(\theta/b,c)$ random variable.

\section{Other proofs}\label{other_proofs}
\subsection{Proof for Model I}\label{bloup}
To prove Theorem \ref{thm principal splitting trees}, we will follow Tavar\'e's proof \cite{Tavaré_BirthProcImm}.
We begin with a technical lemma which will be useful in the proof of this theorem and in forthcoming proofs.
\begin{lem}\label{ppp}
Let $(T_i,i\geq1)$ be the arrival times of a Poisson process with parameter $\rho$ and $\zeta_1,\zeta_2,\dots$ be i.i.d. r.v. independent of the $T_i$'s. Denote by $g$ the density of $\zeta_1$ with respect to Lebesgue measure and by $F(v):=\p(\zeta_1\geq v)$ its distribution tail.  Then for $r>0$, $(e^{-r T_i}\zeta_i,i\geq1)$ are the points of an non-homogeneous Poisson point process on $(0,\infty)$ with intensity measure $\frac{\rho}{r}\frac{F(v)}{v}dv.$
\end{lem}
 \begin{proof}We first study the collection $\Pi=\left\{(T_{i},\zeta_i), i\geq1\right\}$. This is a Poisson point process on $(0,\infty)\times(0,\infty)$ with intensity measure $\rho g(y)dtdy$. Then, $(e^{-r T_i}\zeta_i,i\geq1)$ is a Poisson point process whose intensity measure is the image of $\rho g(y)dtdy$ by $(t,y)\mapsto e^{-r t}y$. We now compute it. Let $h$ be a non-negative mapping. Changing variables, we get
{\setlength\arraycolsep{2pt}
\begin{eqnarray}
\intpos\intpos h(e^{-rt}y)\rho g(y)dtdy&=&\frac{\rho}{r}\intpos h(v)dv\int_0^1 g\left(\frac{v}{u}\right)\frac{du}{u^2}\nonumber\\
&=&\frac{\rho}{r}\intpos h(v)\frac{F(v)}{v}dv\nonumber
\end{eqnarray}}
and the proof is completed.
\end{proof}

We are now able to prove Theorem \ref{thm principal splitting trees}. By Proposition \ref{conv pop} and Theorem \ref{convpresquesureimmig},
$$I(t)^{-1}(Z^{(1)}(t),Z^{(2)}(t),\dots)=\frac{e^{-\eta t}(Z^{(1)}(t),Z^{(2)}(t),\dots)}{e^{-\eta t}I(t)}\underset{t\rightarrow\infty}\longrightarrow \left(\frac{\sigma_1}{\sigma},\frac{\sigma_2}{\sigma},\dots\right)\quad\textrm{a.s.}$$
where $\sigma_i:=\exp\left(-\eta T^{(i)}\right)E_i$ and $\sigma:=\sum_{i\geq1}\sigma_i$.

Moreover, the $(\sigma_i)_{i\geq 1}$ are the points of a non-homogeneous Poisson point process on $(0,\infty)$ with intensity measure $\displaystyle\frac{\theta}{b}\frac{e^{-c  y}}{y}dy$ thanks to Lemma \ref{ppp} with $\rho=\theta\eta/b$, $r=\eta$ and $F(v)=e^{-cv}$ because $(T^{(i)})_{i\geq 1}$ is a Poisson process of rate $\theta\eta/b$ and $(E_i)_{i\geq1}$ is an independent sequence of i.i.d. exponential variables with parameter $c$.

   According to \cite[p. 89]{Bertoin2006}, the Poisson point process $(\sigma_i)_{i\geq 1}$ satisfies $\sigma=\sum_{i\geq1}\sigma_i<\infty$ (actually $\sigma$ has a Gamma distribution) and the vector $$\left(\frac{\sigma_1}{\sigma},\frac{\sigma_2}{\sigma},\dots\right)$$ follows the GEM distribution with parameter $\theta/b$ and is independent of $\sigma$.

\subsection{Proof for Model II}\label{bloup2}
We will prove Theorem \ref{thm multitype}.
We recall that in Model II, immigrants are of type $i$ with probability $p_i$.
Denote by $N^i(t)$ the number of immigrants of type $i$ which arrived before time $t$. Then, $(N^i(t),t\geq0)$ is a Poisson process with parameter $\theta p_i$ and the processes $(N^i,i\geq 1)$ are independent. Hence, $I_1(t),I_2(t),\dots$ are independent and their asymptotic behaviors are the same as $I(t)$ in Theorem \ref{convpresquesureimmig}, replacing $\theta$ with $\theta p_i$.
Then we have
$$e^{-\eta t}I_i(t)\underset{t\rightarrow\infty}\longrightarrow I_i\quad \textrm{ a.s.}\qquad i\geq 1$$
where the $I_i$'s are independent and $I_i$ has a Gamma distribution $\Gamma(\theta_i,c)$ (recall that $\theta_i=\theta p_i/b$).

Moreover, $$e^{-\eta t}I(t)\underset{t\rightarrow\infty}\longrightarrow I\quad \textrm{ a.s.}$$
where $I\sim\Gamma(\theta/b,c)$.
Therefore, for
$r\geq1$,$$\lim_{t\rightarrow\infty}I(t)^{-1}(I_1(t),\dots,I_r(t))=\left(\frac{I_1}{I},\dots,\frac{I_r}{I}\right)\textrm{\quad
a.s.}$$
In order to investigate the law of this $r$-tuple, we prove
that
\begin{equation}\label{sansnom}I=\sum_{i\geq1}I_i \quad \textrm{
a.s.}\end{equation}
First, by Fatou's lemma,
$$\liminf_{t\rightarrow\infty}e^{-\eta t}\sum_{i\geq1} I_i(t)\geq
\sum_{i\geq1}\liminf_{t\rightarrow\infty} e^{-\eta t}I_{i}(t)\textrm{ a.s.}$$
and so $$I\geq\sum_{i\geq1}I_i \textrm{ a.s.}$$
Second,$$\Esp\left[\sum_{i\geq1}I_i\right]=\sum_{i\geq1}\frac{\theta_i}{c}=\frac{\theta
}{bc}=\Esp[I].$$
The last two equations yield (\ref{sansnom}).
For $1\leq i\leq r$, we can write
$$\frac{I_i}{I}=\frac{I_i}{I_1+\cdots+I_r+I^*}$$ where $I^*$ is
independent of $(I_i,1\leq i\leq r)$ and has a Gamma
distribution $\Gamma(\theta/b-\overline{\theta}_r,c)$ with $\overline{\theta}_r:=\sum_{i=1}^r\theta_i$.
Hence, one can compute the joint density of the $r$-tuple $\left(I_1/I,\dots,I_r/I\right)$ as follows
$$f(x_1,\dots,x_r)=\frac{\Gamma(\theta/b)}{\Gamma(\theta/b-\overline{\theta}_r)\prod_{i=1}^r\Gamma(\theta_i)}x_1^{\theta_1-1}\!\!\cdots x_r^{\theta_r-1}(1-x_1-\cdots-x_r)^{\theta/b-\overline{\theta}_r-1}$$
for $x_1,\dots,x_r>0$ satisfying $x_1+\cdots+x_r<1$.
This joint density is exactly that of $(P'_1,\dots,P'_r)$ defined in the statement of the theorem.

\subsection{Proofs for Model III}\label{model3}
We first prove the almost sure convergence in Proposition \ref{syco}. In order to do that, we use the same arguments as in the proof of Theorem \ref{convpresquesureimmig}(ii): we will use the dominated convergence theorem for the sum
$$e^{-\eta t}I(t)=\sum_{i\geq1}e^{-\eta t}I_{\Delta_i}^i(t-T_i)\1_{\{t\geq T_i\}}.$$
As in a previous proof, this sum is bounded by $\displaystyle\sum_{i\geq1}e^{-\eta T_i}\supt\left(e^{-\eta t}I_{\Delta_i}^i(t)\right)$ which, according to Lemma \ref{series}, is a.s. finite if $$\Esp\left[\lop\supt\left(e^{-\eta t}I_{\Delta}^1(t)\right)\right]<\infty.$$
However, $\displaystyle I_{\Delta}^1(t)=\sum_{i\geq1}X^i(t-\tilde T_i)\1_{\{t\geq\tilde T_i\}}$ where conditionally on $\Delta$, $(\tilde T_i,i\geq1)$ is a Poisson process with parameter $\Delta$.
Hence, $$\supt\left(e^{-\eta t}I_\Delta(t)\right)\leq\sum_{i\geq1}e^{-\eta\tilde T_i}\supt\left(e^{-\eta t}X^i(t)\right)=\sum_{i\geq1}e^{-\eta\tilde T_i}J_i$$ where $J_1,J_2,\dots$ is an i.i.d. sequence of random variables independent from $\tilde T_1,\tilde T_2,\dots$ distributed as $\supt( e^{-\eta t}X(t))$ where $(X(t),t\geq0)$ is a homogeneous CMJ-process.
According to Proposition \ref{propCMJ}(ii), we know that $\Esp[(\lop J_1)^2]<\infty$.

We define for $i\geq1$, $\varsigma_i:=e^{-\eta\tilde T_i}J_i$ and $\vs:=\sum_{i\geq1}\vs_i$ and we have to prove that $\Esp[\lop\vs]$ is finite.
To do that, we first work conditionally on $\Delta$. According to Lemma \ref{ppp}, we know that $(\vs_i,i\geq1)$ are the points of a non-homogeneous Poisson process on $(0,\infty)$ with intensity measure $\frac{\Delta}{\eta}\frac{L(v)}{v}dv$ where $L(v):=\p(J\geq v)$.
Then, using the inequality $$\lop(x+y)\leq\lop x+\lop y +\log2,\quad x,y\geq0,$$
we have
\begin{equation}\label{decompositionsigma}\Esp[\lop\vs]\leq \log2+\Esp\left[\lop\sum_{i\geq1}\vs_i\1_{\{\vs_i\leq1\}}\right]+\Esp\left[\lop\sum_{i\geq1}\vs_i\1_{\{\vs_i>1\}}\right]\end{equation}
We first consider the second term of the r.h.s.
{\setlength\arraycolsep{2pt}
\begin{eqnarray}
\Esp\left[\lop\sum_{i\geq1}\vs_i\1_{\{\vs_i\leq1\}}\right]&\leq&\Esp\left[\sum_{i\geq1}\vs_i\1_{\{\vs_i\leq1\}}\right]=\int_0^1v\frac{\Delta}{\eta}\frac{L(v)}{v}dv\leq\frac{\Delta}{\eta}
.\label{petitssigma}\end{eqnarray}}
Then, we compute the third term of the r.h.s. of (\ref{decompositionsigma}): if $A:=\sup_i\vs_i$,
{\setlength\arraycolsep{2pt}
\begin{eqnarray}
\Esp\left[\lop\sum_{i\geq1}\vs_i\1_{\{\vs_i>1\}}\right]&\leq&\Esp\left[\lop\left(A\cdot\#\{i\geq1|\vs_i>1\}\right)\right]\nonumber\\
&\leq&\Esp\left[\lop A\right]+\Esp\left[\lop\#\{i\geq1|\vs_i>1\}\right]\nonumber
\end{eqnarray}}
 Furthermore, the number of $\vs_i$ greater than 1 has a Poisson distribution with parameter $\int_1^\infty\frac{\Delta}{\eta}\frac{L(v)}{v}dv$. Since $\Esp[\lop J]<\infty$,

$$\intpos \p(\lop J\geq s)ds=\intpos \p(J\geq e^s)ds=\int_1^\infty\frac{L(v)}{v}dv<\infty.$$
Then,
\begin{equation}\label{cardinalsigma}\Esp\left[\lop\#\{i\geq1|\vs_i>1\}\right]\leq\Esp\left[\#\{i\geq1|\vs_i>1\}\right]=\frac{\Delta}{\eta}\int_1^\infty\frac{L(v)}{v}dv\leq C\Delta\end{equation}
where $C$ is a finite constant which does not depend on $\Delta$.

We now want to study $A$: $$\p(A\leq x)=\p(\#\{i\geq1|\vs_i>x\}=0)=\exp\left(-\int_x^\infty\frac{\Delta}{\eta}\frac{L(v)}{v}dv\right),\quad x>0.$$
So that
$$\p(A\in dx)=\frac{\Delta}{\eta}\frac{L(x)}{x}\exp\left(-\int_x^\infty\frac{\Delta}{\eta}\frac{L(v)}{v}dv\right)dx.$$
Then, {\setlength\arraycolsep{2pt}
\begin{eqnarray}\Esp[\lop A]&=&\int_1^\infty\log x\frac{\Delta}{\eta}\frac{L(x)}{x}\exp\left(-\int_x^\infty\frac{\Delta}{\eta}\frac{L(v)}{v}dv\right)dx\nonumber\\
&\leq& \frac{\Delta}{\eta}\int_1^\infty\log x\frac{L(x)}{x}dx=\frac{\Delta}{\eta}\intpos uL(e^u)du\nonumber\\
&\leq&\frac{\Delta}{\eta}\intpos u\p(\lop J\geq u)du\leq C'\Delta\label{majoration max}
\end{eqnarray}}
where $C'$ is a finite constant since $\Esp\left[\left(\lop J\right)^2\right]<\infty$ according to Proposition \ref{propCMJ}(ii).
Hence, with (\ref{petitssigma}), (\ref{cardinalsigma}) and (\ref{majoration max}) we have

$$\Esp[\lop \vs|\Delta]\leq \log2+C''\Delta.$$
Then, $\Esp[\lop \vs]$ is finite because $\Esp[\Delta]=\theta^{-1}\intpos x^2 f(x)dx<\infty$.
and, using the dominated convergence theorem, $e^{-\eta t}I(t)$ a.s. converges toward $\sigma=\sum_{i\geq1}e^{-\eta T_i}G_i$ as $t\rightarrow\infty$.\\

We now compute the law of the limit $\sigma$. We define $\sigma_i:=\exp\left(-\eta T_{i}\right)G_i$ for $i\geq 1$.
Then, using Lemma \ref{ppp}, $(\sigma_i)_{i\geq 1}$ are the points of a non-homogeneous Poisson point process on $(0,\infty)$ with intensity measure $\frac{\theta}{\eta}\frac{F(y)}{y}dy$ where $F(y)=\p(G\geq y)$.
To compute the Laplace transform of $\sigma$, we use the exponential formula for Poisson processes: for $s >0$,
$$\Esp\left[e^{-s\sigma}\right]=\exp\left(-\frac{\theta}{\eta}\intpos\frac{F(v)}{v}\left(1-e^{-sv}\right)dv\right).$$
and to get the expectation of $\sigma$, we differentiate the last displayed equation at $0$:
$$\Esp[\sigma]=\frac{\theta}{\eta}\intpos F(v)dv=\frac{\theta}{\eta}\Esp[G]$$
and
{\setlength\arraycolsep{2pt}
\begin{eqnarray}
E[G]&=&\theta^{-1}\intpos xf(x)\frac{x}{b}\frac{1}{c}dx<\infty\label{esp(sigma)}\nonumber
\end{eqnarray}}
Hence, $$\Esp[\sigma]=\frac{1}{\eta bc}\intpos x^2f(x)dx<\infty$$ which ends the proof of Proposition \ref{syco}.

 It remains to prove Theorem \ref{abondmodele3} that is to show that the vector $(Z^1(t),Z^2(t),\dots)/I(t)$ a.s. converges to a Poisson point process with intensity measure $\frac{\theta}{\eta}\frac{F(y)}{y}dy$. It  is straightforward using previous calculations, Propositions \ref{noufr} and \ref{syco}.

\begin{rem}
Thanks to similar calculations as in Theorem \ref{convpresquesureimmig}(i), we can compute the generating function of $I(t)$  \begin{equation*}\label{cc}\Esp\left[s^{I(t)}\right]=\exp\left(-\int_0^t du\left(\theta-\intpos\frac{xf(x)}{\left(W(u)(1-s)+s\right)^{x/b}}dx\right)\right),\ s\in[0,1].\end{equation*}
and we can deduce the law of $\sigma$  in another way.

\end{rem}
\section*{Acknowledgments}
I want to thank my supervisor, Amaury Lambert,
for his very helpful remarks. My thanks also to the referee for his careful check and advice.

\bibliographystyle{apt}
\bibliography{ma_biblio}
\end{document}